\def\ps@pprintTitle{%
  \let\@oddhead\@empty
  \let\@evenhead\@empty
  \let\@oddfoot\@empty
  \let\@evenfoot\@oddfoot
}
\setlist[enumerate,1]{label={\upshape(\roman*)}}
\newcommand{\sV}[2]{{
	\setlength{\arraycolsep}{2pt}
	\renewcommand{\arraystretch}{0.8}
	\left[\begin{array}{ccc} #1 \\ #2 \end{array}\right]
}}
\newtheorem{thm}{Theorem}[section]
\newtheorem{lem}[thm]{Lemma}
\newtheorem{cor}[thm]{Corollary}
\newtheorem{prop}[thm]{Proposition}
\theoremstyle{definition}
\newtheorem{dfn}[thm]{Definition}
\newtheorem{remark}[thm]{Remark}
\newtheorem{exmp}[thm]{Example}
\newcommand{\tA}{\widetilde{\mathsf{A}}}
\newcommand{\C}{\mathbb{C}}
\newcommand{\allone}{\mathbf{1}}
\newcommand{\allzero}{\mathbf{0}}
\newcommand{\cX}{\mathcal{X}}
\newcommand{\cY}{\mathcal{Y}}
\newcommand{\Trel}{T}
\newcommand{\Srel}{S}
\newcommand{\Rrel}{R}
\newcommand{\Pmat}{\mathsf{P}}
\newcommand{\Qmat}{\mathsf{Q}}
\newcommand{\Imat}{\mathsf{I}}
\newcommand{\Emat}{\mathsf{E}}
\newcommand{\Jmat}{\mathsf{J}}
\newcommand{\Amat}{\mathsf{A}}
\newcommand{\Rmat}{\mathsf{B}}
\newcommand{\Bmat}{\mathsf{S}}
\newcommand{\Cmat}{\mathsf{T}}
\newcommand{\Smat}{\mathsf{S}}
\newcommand{\Tmat}{\mathsf{T}}
\newcommand{\Mmat}{\mathsf{M}}
\newcommand{\Perm}{\mathsf{P}}
\title{Roux schemes which carry association schemes locally}
\begin{document}
\begin{frontmatter}
\author[Memphis]{Alexander L. Gavrilyuk}
\ead{a.gavrilyuk@memphis.edu}
\author[Tohoku]{Jesse Lansdown\corref{cor1}\fnref{label2}}
\ead{jesse.lansdown@universityofgalway.ie}
\author[Tohoku]{Akihiro Munemasa}
\ead{munemasa@tohoku.ac.jp}
\author[NDAJ]{Sho Suda}
\ead{ssuda@nda.ac.jp}

\fntext[label2]{Current affiliation: School of Mathematical and Statistical Sciences, University of Galway, Galway, Ireland}
\cortext[cor1]{Corresponding author}

\address[Memphis]{Department of Mathematical Sciences, University of Memphis, USA}
\address[Tohoku]{Graduate School of Information Sciences, Tohoku University, Japan}
\address[NDAJ]{Department of Mathematics, National Defense Academy of Japan, Japan}

\begin{abstract}
A roux scheme is an association scheme formed from a special ``roux'' matrix and the regular permutation representation of an associated group.
They were introduced by Iverson and Mixon in \cite{IversonMixon} for their connection to equiangular tight frames and doubly transitive lines.
We show how roux matrices can be produced from association schemes and characterise roux schemes for which the neighbourhood of a vertex induces an association scheme possessing the same number of relations as the thin radical.
An important example arises from the $64$ equiangular lines in $\C^8$ constructed by Hoggar \cite{Hoggar} which we prove is unique (determined by its parameters up to isomorphism).
 We also characterise roux schemes by their eigenmatrices and 
 provide new families of roux schemes using our construction.
\end{abstract}

\begin{keyword}
association schemes \sep roux \sep equiangular tight frames \sep covers of the complete graph




\end{keyword}

\end{frontmatter}

\section{Introduction}\label{sec:intro}
Equiangular lines (and equiangular tight frames in particular) are optimal packings of lines in complex space and as a result they have important applications in fields such as signal processing, quantum information theory, and compressed sensing. A sequence $\mathcal{L} = \{\ell_i\}_{i=1}^m$ of lines through the origin in $\C^d$ is called \emph{equiangular} if there are unit norm vectors $\varphi_i \in \ell_i$ and a constant $\alpha$ such that
\[
|\langle \varphi_i, \varphi_j \rangle | = \alpha
\]
for all $i \neq j$.
A set of equiangular lines in $\C^d$ has size at most $d^2$.
Furthermore, a set of unit vectors $\{\varphi_i\}_{i=1}^m$ is called an \emph{equiangular tight frame (or ETF)} if $\mathcal{L} =  \{\mathbb{C}\varphi_i\}_{i=1}^m$ is equiangular and the matrix $\mathsf{\Phi} = \left[ \varphi_1 \ldots \varphi_m \right] \in \C^{d \times m}$ satisfies $\mathsf{\Phi} \mathsf{\Phi}^* = \beta \mathsf{I}$ for some $\beta \in \mathbb{R}$.

An important example is the set of $64$ equiangular lines in $\C^8$ with $\alpha = \frac{1}{3}$ constructed by Hoggar \cite{Hoggar}, which we shall refer to as \emph{Hoggar's lines}.
Let $\Phi = \{\varphi_j\}_{j=1}^{64}$ be the unit norm vectors corresponding to Hoggar's lines (as given in \cite{Hoggar}) and let
\[
\Omega=\Phi\cup i \Phi \cup(-\Phi) \cup(-i \Phi).
\]  Then we can define a binary relation $\Rrel_j$ on $\Omega$ for $j=0,1,\ldots,7$ by 
\begin{equation*}
(\boldsymbol{x},\boldsymbol{y})\in \Rrel_j \Leftrightarrow  \langle \boldsymbol{x},\boldsymbol{y}\rangle=\alpha_j \quad (\boldsymbol{x},\boldsymbol{y}\in \Omega),
\end{equation*}
where 
$
\alpha_0=1,\alpha_1=i,\alpha_2=-1,\alpha_3=-i,
\alpha_4=1/3,\alpha_5=i/3,\alpha_6=-1/3,\alpha_7=-i/3.
$
Then the pair $\mathcal{H}=(\Omega,\{\Rrel_0,\Rrel_1,\ldots,\Rrel_7\})$ is a $7$-class association scheme, which we shall refer to as \emph{Hoggar's scheme} (and denote by $\mathcal{H}$) due to its dependence on Hoggar's lines.

The Gram matrix $(\langle \varphi_i, \varphi_j \rangle)_{ij}$ of an ETF is a scalar multiple of an orthogonal projection matrix, where the off-diagonal entries have the same modulus. Note that orthogonal projection matrices may be found in the Bose-Mesner algebra of an association scheme by taking any sum of the primitive idempotents. This fact, combined with the example of Hoggar's scheme, suggests a connection between ETFs and association schemes.

This connection was explored by Iverson and Mixon in \cite{IversonMixon} where they introduce roux schemes and roux matrices. The term ``roux'' were coined because (analagous to roux in cooking) they are formed when an otherwise thin association scheme is ``thickened''. 
\begin{dfn} \label{def:rouxmat}
A \emph{roux matrix} for an abelian group $G$ is an $n\times n$ matrix $\Rmat$ ($n>2$) with entries in $\mathbb{C}[G]$ such that:
\begin{enumerate}
        \item[(R1)] $\Rmat_{ii}=0$ for $i \in \{1, \ldots, n\}$, \label{cond:roux1}
        \item[(R2)] $\Rmat_{ij} \in G$ for $i, j \in \{1, \ldots, n\}$, $i \neq j$,\label{cond:roux2}
        \item[(R3)] $\Rmat_{ji}=(\Rmat_{ij})^{-1}$ for $i, j \in \{1, \ldots, n\}$, $i \neq j$,\label{cond:roux3}
        \item[(R4)] The matrices $\{g\mathsf{I}\}_{g \in G}$ and $\{g\Rmat\}_{g \in G}$ span an algebra. \label{cond:roux4}
    \end{enumerate}
\end{dfn}
Given a roux matrix  $\Rmat$ for some abelian group $G$ of order $r$, let $\lceil\cdot\rfloor\colon \mathbb{C}[G]^{n \times n} \to \mathbb{C}^{rn \times rn}$ denote the injective algebra homomorphism which applies the regular permutation representation of $G$ to each entry of the matrix. Then $\{\lceil g\Imat\rfloor\}_{g \in G}$ and $\{\lceil g\Rmat \rfloor\}_{g \in G}$ are the adjacency matrices of a commutative association scheme, and any scheme which can be obtained in this way is called a \emph{roux scheme}. Note that $\{\lceil g\Rmat \rfloor\}_{g \in G}$ represent thick relations, due to the assumption that $n>2$. As a result, a roux matrix can be seen as a compact representation of a roux scheme. Evaluating a roux matrix at a character of the corresponding group produces the signature matrix of an ETF. We refer to \cite{IversonMixon} and Figure 1 therein for more relations between roux matrices, roux schemes, and ETFs.

Roux schemes generalise the construction of Godsil and Hensel \cite{GodsilHensel} and so include regular abelian distance-regular antipodal covers of the complete graph (DRACKNs).  Observe that a roux scheme has $r$ relations of valencies $1$ and $r$ relations of valencey $n-1$, and so it is a pseudo-TI scheme \cite{ChenPonomarenko} (if it is also Schurian then it is a TI-scheme). The focus of \cite{IversonMixon} is primarily on doubly transitive lines, so the Schurian case of roux schemes is of central importance to them. These are shown to correspond to special group-subgroup pairs called Higman pairs by the authors of \cite{IversonMixon}, which are a special case of Gelfand pairs with additional conditions necessary to ensure the roux properties. Other than a construction involving conference matrices, the constructions of roux schemes in \cite{IversonMixon} and the sequel paper \cite{IversonMixon2} centre primarily on the use of Higman pairs, and hence are Schurian schemes.

In fact, strong symmetry conditions are an integral component of roux schemes, as demonstrated in the following lemma which characterises roux schemes.

\begin{lem}[{\cite[Lemma 2.2]{IversonMixon}}] \label{lem:RouxRadicalCharacterisation}
An association scheme is isomorphic to a roux scheme if and only if it is commutative and its thin radical acts regularly (by multiplication) on the other adjacency matrices, at least one of which is symmetric.
\end{lem}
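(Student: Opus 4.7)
The plan is to treat the two directions separately. For the forward direction, assuming the scheme comes from a roux matrix $\Rmat$ over an abelian group $G$, I would first observe that commutativity of the Bose--Mesner algebra follows from (R4) together with $G$ being abelian, since $(g\Imat)$ commutes with $(g'\Rmat)$ in $\mathbb{C}[G]^{n\times n}$, and apply the injective algebra homomorphism $\lceil\cdot\rfloor$. Next I would identify the thin radical: each $\lceil g\Imat\rfloor$ is a block-diagonal permutation matrix, these $|G|$ matrices form a subgroup isomorphic to $G$, and (for $n \geq 3$) every $\lceil g\Rmat\rfloor$ has row sum $n-1 \geq 2$ and so lies outside the thin radical. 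The thin radical then acts on the non-thin adjacency matrices via $\lceil g\Imat\rfloor \cdot \lceil g'\Rmat\rfloor = \lceil gg'\Rmat\rfloor$, which is evidently regular. Finally, (R3) forces $\lceil \Rmat\rfloor$ to equal its transpose, because the $(i,j)$-block permutation matrix of $\Rmat_{ij}$ matches the $(j,i)$-block permutation matrix of $\Rmat_{ij}^{-1} = \Rmat_{ji}$.

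For the converse, let the scheme be commutative with thin radical $G$ acting regularly on its non-thin adjacency matrices, and pick a symmetric non-thin adjacency matrix $A$. I would first note that $G$, viewed as a group of permutations of the vertex set $\Omega$, acts freely (each non-identity element's adjacency matrix has zero diagonal), so the $G$-orbits on $\Omega$ all have size $|G|$; picking representatives $v_1, \ldots, v_n$ yields an identification $\Omega \cong \{1, \ldots, n\} \times G$ under which the thin adjacency matrices are precisely $\{\lceil g\Imat\rfloor\}_{g \in G}$. The key step is to show that for each $i \neq j$ there is a unique $s_{ij} \in G$ with $A_{(i,e),(j,s_{ij})}=1$: by the regular action the non-thin adjacency matrices are $\{gA : g \in G\}$, and using commutativity of $G$ with $A$ (which gives $G$-invariance $A_{(i,a),(j,b)} = A_{(i,e),(j,a^{-1}b)}$), the count of $h$ with $((i,e),(j,h)) \in R^{gA}$ is independent of $g$; since the $|G|$ non-thin relations partition the $|G|$ pairs $\{((i,e),(j,h)) : h \in G\}$, this common count must equal $1$. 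Set $\Rmat_{ij} = s_{ij}$ for $i \neq j$ and $\Rmat_{ii} = 0$.

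The verification of the roux properties is then short: (R1) and (R2) are immediate, (R3) reduces to the symmetry identity $A_{(j,e),(i,t)} = A_{(i,t),(j,e)} = A_{(i,e),(j,t^{-1})}$, and for (R4) a direct computation identifies $\lceil \Rmat \rfloor$ with $A$ (hence $\lceil g\Rmat \rfloor$ with $gA$), so the span of $\{g\Imat, g\Rmat\}_{g \in G}$ corresponds under $\lceil\cdot\rfloor$ to the Bose--Mesner algebra and is therefore closed under multiplication. The main obstacle is the uniqueness of $s_{ij}$ in the backward direction: it requires coordinating the regular action of $G$ on the non-thin adjacency matrices with the orbit structure on $\Omega$, and must be proved before any of the roux properties can be checked; once this is in hand, everything else is bookkeeping.
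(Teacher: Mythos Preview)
The paper does not contain a proof of this lemma: it is quoted verbatim as \cite[Lemma~2.2]{IversonMixon} and used as a black box, so there is no in-paper argument to compare your proposal against.

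That said, your proposal is a correct and self-contained proof. The forward direction is routine once one notes that $G$ abelian makes all products $g\Imat\cdot g'\Rmat$, $g\Rmat\cdot g'\Rmat$ commute, and your identification of the thin radical with $\{\lceil g\Imat\rfloor\}_{g\in G}$ (for $n\geq 3$) and of the symmetric thick matrix $\lceil\Rmat\rfloor$ via (R3) is exactly right. In the converse, the crucial point is indeed the uniqueness of $s_{ij}$, and your counting argument---using $\Tmat_gA=A\Tmat_g$ to get $A_{(i,a),(j,b)}=A_{(i,e),(j,a^{-1}b)}$, then observing that the $|G|$ non-thin relations $\{gA\}_{g\in G}$ partition the $|G|$ pairs $\{((i,e),(j,h)):h\in G\}$ into equal pieces---is clean and complete. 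The identification $\lceil\Rmat\rfloor=A$ then pins down (R4) via the Bose--Mesner algebra, and you correctly note that this also shows the roux scheme built from $\Rmat$ reproduces the original scheme, which is what ``isomorphic to a roux scheme'' requires. The only loose end is the degenerate case $n\leq 2$, where the thick relations become thin and the statement is vacuous or trivially adjusted; your parenthetical already flags this.
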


Although the definition of a roux matrix is relatively simple, finding such matrices is highly non-trivial. This is due mostly to the difficulty in producing an algebra to satisfy property (R4). In this paper we consider the construction of roux matrices from association schemes. This is a natural consideration since there is already an algebra, namely the Bose-Mesner algebra, connected to the association scheme. 
In Section \ref{sec:construction} we provide such a construction of roux matrices from a group and an association scheme, determine necessary compatability conditions between the two, and provide specific constructions and examples illustrating this method.

In Section \ref{sec:EigenCharacterisation} we determine the eigenmatrices of roux schemes and show that the structure of the eigenmatrices characterises roux schemes.

By constructing roux matrices from association schemes, the original association scheme appears locally in the neighbourhood of a vertex of the resulting roux scheme. We show in Section \ref{sec:Decomp} how to decompose roux schemes and in particular that they always correspond to our construction from Section \ref{sec:construction} if they carry an association scheme locally about any vertex which has the same number of classes as the thin radical.

Finally in Section \ref{sec:Uniqueness} we consider Hoggar's scheme. One of our motivations was to see if there are other roux schemes with the same parameters as Hoggar’s scheme. This would result in new ETFs similar to Hoggar's lines. However, we show that Hoggar's scheme is unique, that is, any association scheme with the same parameters as Hoggar's scheme is isomorphic to Hoggar's scheme. We still hope that Hoggar's scheme might belong to a family of association schemes that fit within our frame work, resulting in ETFs in different dimensions.

\section{Background}

\subsection{Association schemes}
Let us recall some standard facts from the theory of association schemes (see \cite{BannaiIto}). Let $\Omega$ be a finite set of cardinality $n$, and let $\{\Rrel_i\}_{i=0}^d$ be a set of binary relations on $\Omega$. Define $\Rrel_i^\top = \{ (y, x) \mid (x,y) \in \Rrel_i\}$. Then $\cX = (\Omega, \{\Rrel_i\}_{i=0}^d)$ is an 
(commutative)
\emph{association scheme} 
of $d$ classes
if the following properties hold:
\begin{enumerate}
\item $\Rrel_0$ is the identity relation,
\item $\{\Rrel_i\}_{i=0}^d$ is a partition of $\Omega \times \Omega$,
\item $\Rrel_j^\top\in\{\Rrel_i\}_{i=0}^d$ ($0 \le j \le d$),
\item Given $0 \le i,j,k \le d$, the number
	$p_{ij}^k = |\{z \mid (x, z) \in \Rrel_i, (z, y) \in \Rrel_j\}|$
	does not depend on the choice of $(x, y) \in \Rrel_k$.
\item  $p_{ij}^k = p_{ji}^k$
 ($0 \le i,j \le d$).
\end{enumerate}
The non-negative integers $p_{ij}^k$ are called the \emph{intersection numbers}, which we also refer to collectively as the parameters of the association scheme. We will often refer to an association scheme simply as a ``scheme''.
A scheme is \emph{symmetric} 
if $\Rrel_j^\top=\Rrel_j$ for $0 \leq j \leq d$.

Let $\Amat_i$ denote the adjacency matrix of the relation $\Rrel_i$ for $i=0,1,\ldots,d$, then an association scheme can equivalently be described by the following conditions:
\begin{enumerate}
\item $\Amat_0=\Imat_n$, the identity matrix of size $n$,
\item $\sum_{i=0}^d \Amat_i = \Jmat$, the square all-one matrix of size $n$,
\item $\Amat_i^\top\in\{\Amat_0,\Amat_1,\ldots,\Amat_d\}$ ($0 \le i \le d$),
\item $\Amat_i\Amat_j=\sum_{k=0}^d p_{ij}^k\Amat_k$,
\item  $\Amat_i\Amat_j=\Amat_j\Amat_i$
 ($0 \le i,j \le d$).
\end{enumerate}
We refer to $\Amat_0, \ldots, \Amat_d$ as the \emph{adjacency matrices} and note that $\Amat_i^\top$ is the standard matrix transpose.
Throughout, we will use normal and sans serif font to show the connection between relations and adjacency matrices. For example we use normal font $\Trel$ and $\Srel$ to represent thin and thick relations respectively, while sans serif font $\Tmat$ and $\Smat$ is used for the corresponding adjacency matrices.

The \emph{Bose-Mesner algebra} (or \emph{adjacency algebra}) of the association scheme is the $(d+1)$-dimensional $\mathbb{C}$-algebra spanned by the adjacency matrices.
It is closed under both standard matrix multiplication and also entrywise multiplication (also called Schur or Hadamard multiplication, and denoted by $\circ$). There is a second basis for
the Bose-Mesner algebra
consisting of primitive idempotents, $\Emat_0, \ldots, \Emat_d$. These minimal idempotents obey a similar condition to the adjacency matrices, namely
\[
\Emat_i\circ \Emat_j=\frac{1}{n}\sum_{k=0}^d q_{ij}^k\Emat_k,
\]
for $0 \leq i,j \leq d$. The $q_{ij}^k$ are non-negative constants called the \emph{Krein parameters}. Moreover, there exists a nonsingular matrix $\mathsf{P}$ called the \emph{(first) eigenmatrix} such that $(\Amat_0, \Amat_1, \ldots, \Amat_d) = (\Emat_0, \Emat_1, \ldots, \Emat_d)\mathsf{P}$. The \emph{second eigenmatrix} $\mathsf{Q}$ is obtained 
by $\mathsf{Q}= n\mathsf{P}^{-1}$.

The \emph{valencies} of an association scheme are defined as $k_i = p_{ii}^0$ for $0\leq i \leq d$. We call a relation $\Rrel_i$ \emph{thin} if $k_i=1$. Note that for a thin relation, $\Amat_i$ is a permutation matrix, and so acts on the elements of 
the Bose-Mesner algebra
by normal matrix multiplication. In fact, the set of thin relations is a group, called the \emph{thin radical} \cite{Zieschang}.

For any vertex $z$ we define its \emph{neighbourhood} with respect to the relation $\Rrel_i$ by $\Rrel_i(z) :=\{x \in \Omega\colon (z, x) \in \Rrel_i\}$. Considering the restriction of each relation to this neighbourhood 
\[
\Rrel_j' := \{(x, y) \in \Rrel_j\colon x, y \in \Rrel_i(z)\},
\]
if $(\Rrel_i(z), \{R_j'\colon R_j' \neq \varnothing\})$ forms an association scheme then we refer to it as a \emph{local scheme} and say that $\cX$ \emph{carries an association scheme locally about the vertex $z$}.

\subsection{Triple intersection numbers} 
Triple intersection numbers and vanishing Krein parameters were first considered by Cameron, Goethals and Seidel for strongly regular graphs \cite{CameronEtAl1978}, by 
Coolsaet and Juri\v{s}i\'{c} for 
distance-regular graphs 
\cite{TripleKrein} and by Gavrilyuk, Vidali and Williford for symmetric association schemes \cite{Q-TripleKrein}. Here we develop the theory of triple intersection numbers for 
an arbitrary commutative (not necessarily symmetric) association scheme 
$(\Omega,\{\Rrel_0,\Rrel_1,\ldots,\Rrel_d\})$ and give Proposition \ref{prop:qijk=0} which generalises \cite[Theorem 3]{TripleKrein}. 

For a triple of vertices $u, v, w \in \Omega$ and integers $i$, $j$, $k$ ($0 \le i, j, k \le d$)
we denote by $\sV{u & v & w}{i & j & k}$
(or simply $[i\ j\ k]$ when it is clear
which triple $(u,v,w)$ we have in mind)
the number of vertices $x \in \Omega$ such that
$(u, x) \in \Rrel_i$, $(v, x) \in \Rrel_j$ and $(w, x) \in \Rrel_k$.
We call these numbers {\em triple intersection numbers} 
(with respect to $u,v,w$).
A scheme is said to be
\emph{triply regular} if the triple 
intersection numbers $[i\ j\ k]$ 
only depend on $i,j,k$ and 
the relations containing the pairs $(u, v)$, $(v,w)$, and $(u,w)$
but not on the particular choice of vertices $u,v,w$.

Similar to the symmetric case, we have
{\small
\begin{equation}
\sum_{\ell=0}^d [\ell\ j\ k] = p^U_{jk'}, \qquad
\sum_{\ell=0}^d [i\ \ell\ k] = p^V_{ik'}, \qquad
\sum_{\ell=0}^d [i\ j\ \ell] = p^W_{ij'},
\label{eqn:triple}
\end{equation}
}
\begin{equation}
[0\ j\ k] = \delta_{j'W} \delta_{k'V}, \qquad
[i\ 0\ k] = \delta_{iW} \delta_{k'U}, \qquad
[i\ j\ 0] = \delta_{iV} \delta_{jU},
\label{eqn:triple2}
\end{equation}
where $(v, w) \in \Rrel_U$, $(u, w) \in \Rrel_V$, $(u, v) \in \Rrel_W$, and
primed indices correspond 
to the transposed relations.
\begin{proof}[Proof of \eqref{eqn:triple}, \eqref{eqn:triple2}]
Recall that $\Rrel_i(x)=\{y\in \Omega \mid (x,y)\in \Rrel_i \}$, and  
for $(x,y)\in \Rrel_k$, $p_{ij'}^k=|\Rrel_i(x)\cap \Rrel_j(y)|$. 
Then
{\allowdisplaybreaks
\begin{align*}
\sum_{\ell=0}^d [\ell\ j\ k] &=\sum_{\ell=0}^d|\Rrel_\ell(u)\cap \Rrel_j(v)\cap \Rrel_k(w)|\\
&=|\bigcup_{\ell=0}^{d}(\Rrel_\ell(u)\cap \Rrel_j(v)\cap \Rrel_k(w))|\\
&=|\Omega\cap \Rrel_j(v)\cap \Rrel_k(w)|\\
&=p_{jk'}^{U}.
\end{align*}}

The other equalities are obtained by similar calulations, completing the proof of \eqref{eqn:triple}. 

For \eqref{eqn:triple2}, 
by $(v, w) \in \Rrel_U$, $(u, w) \in \Rrel_V$, $(u, v) \in \Rrel_W$, one has 
\begin{align*}
[0\ j\ k]&=|\Rrel_0(u)\cap \Rrel_j(v) \cap \Rrel_k(w)| \\
&=|\{x\in \Omega\mid x=u, (v,x)\in \Rrel_j, (w,x)\in \Rrel_k\}|= \delta_{j'W} \delta_{k'V}
\end{align*}
The other equalities are obtained by similar calulations, completing the proof of \eqref{eqn:triple2}.
\qedhere
\end{proof}

\begin{prop}\label{prop:qijk=0}
Let $(\Omega, \{\Rrel_i\}_{i=0}^d)$ be a commutative association scheme of $d$ classes
with second eigenmatrix $\Qmat$
and Krein parameters $q_{ij}^k$ $(0 \le i,j,k \le d)$.
Then,
\[
q_{ij}^k = 0 \quad \Longleftrightarrow \quad
\sum_{r,s,t=0}^d \Qmat_{ri}\Qmat_{sj}\overline{\Qmat_{tk}}\sV{u & v & w}{r & s & t} = 0
\quad \mbox{for all\ } u, v, w \in \Omega.
\]
\end{prop}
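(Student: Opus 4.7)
The plan is to make both sides of the biconditional concrete by expanding each $\Emat_l$ in an orthonormal eigenbasis, and then to compare them by computing $\operatorname{tr}(\Emat_k(\Emat_i\circ\Emat_j))$ in two different ways. Using $(\Emat_l)_{ab}=\tfrac{1}{n}\Qmat_{rl}$ whenever $(a,b)\in\Rrel_r$, the summation over $x$ hidden in each $\sV{u&v&w}{r&s&t}$ can be pulled out to give
\[
\sum_{r,s,t=0}^{d}\Qmat_{ri}\Qmat_{sj}\overline{\Qmat_{tk}}\sV{u&v&w}{r&s&t}
= n^{3}\sum_{x\in\Omega}(\Emat_i)_{ux}(\Emat_j)_{vx}\overline{(\Emat_k)_{wx}}.
\]
Since each $\Emat_l$ is a Hermitian orthogonal projection onto its $m_l$-dimensional image $V_l$, I would fix an orthonormal basis $\{\phi^{(l)}_\alpha\}_{\alpha=1}^{m_l}$ of $V_l$ so that $(\Emat_l)_{ab}=\sum_\alpha\phi^{(l)}_\alpha(a)\overline{\phi^{(l)}_\alpha(b)}$. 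Substituting and carrying out the $x$-sum collapses inner products to
\[
n^{-3}\sum_{r,s,t}\Qmat_{ri}\Qmat_{sj}\overline{\Qmat_{tk}}\sV{u&v&w}{r&s&t}
= \sum_{\alpha,\beta,\gamma}c_{\alpha\beta\gamma}\,\phi^{(i)}_\alpha(u)\phi^{(j)}_\beta(v)\overline{\phi^{(k)}_\gamma(w)},
\]
where $c_{\alpha\beta\gamma}:=\sum_x\overline{\phi^{(i)}_\alpha(x)\phi^{(j)}_\beta(x)}\,\phi^{(k)}_\gamma(x)$.

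Because each family $\{\phi^{(l)}_\alpha\}_\alpha$ is linearly independent as functions on $\Omega$, the trilinear products $(u,v,w)\mapsto\phi^{(i)}_\alpha(u)\phi^{(j)}_\beta(v)\overline{\phi^{(k)}_\gamma(w)}$ are linearly independent on $\Omega^{3}$; hence the right-hand side above vanishes identically in $(u,v,w)$ exactly when every $c_{\alpha\beta\gamma}$ is zero. To tie this to $q_{ij}^k$, I would compute $\operatorname{tr}(\Emat_k(\Emat_i\circ\Emat_j))$ in two ways. The Krein identity $\Emat_i\circ\Emat_j=\tfrac{1}{n}\sum_l q_{ij}^l\Emat_l$ together with $\Emat_l\Emat_k=\delta_{lk}\Emat_k$ yields the value $(m_k/n)\,q_{ij}^k$, while direct expansion of the trace in the same orthonormal bases produces the non-negative sum $\sum_{\alpha,\beta,\gamma}|c_{\alpha\beta\gamma}|^{2}$. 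Equating these two expressions shows $q_{ij}^k=0$ if and only if every $c_{\alpha\beta\gamma}$ vanishes, which by the previous paragraph is exactly the claimed vanishing of the triple sum for all $u,v,w$.

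The main bookkeeping obstacle will be keeping the complex conjugates in their correct places: for a commutative but not necessarily symmetric scheme, $\Emat_l$ and $\Qmat$ can genuinely be complex, so one must consistently use Hermiticity $\overline{(\Emat_k)_{wx}}=(\Emat_k)_{xw}$ both when rewriting the triple-intersection sum and when expanding the trace, so that $c_{\alpha\beta\gamma}$ (respectively $|c_{\alpha\beta\gamma}|^{2}$) appears with matching conventions in the two calculations.
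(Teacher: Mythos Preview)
Your proof is correct and follows essentially the same strategy as the paper: both compute $\operatorname{tr}\bigl(\Emat_k(\Emat_i\circ\Emat_j)\bigr)$ once via the Krein relation to obtain $\tfrac{m_k}{n}q_{ij}^k$, and once as a nonnegative sum of squared moduli, then conclude. The only difference is presentational: the paper stays basis-free, using the idempotent identity $\Emat_\ell(x,y)=\sum_{z}\overline{\Emat_\ell(z,x)}\,\Emat_\ell(z,y)$ to rewrite the trace directly as $\sum_{u,v,w}|q(u,v,w)|^{2}$ (so the equivalence with vanishing for all $u,v,w$ is immediate), whereas you pass through orthonormal eigenbases to obtain $\sum_{\alpha,\beta,\gamma}|c_{\alpha\beta\gamma}|^{2}$ and then invoke linear independence of the tensor products---a valid but slightly less direct route to the same conclusion.
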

\begin{proof}
Fix $i,j,k$.
Set $q(u,v,w)=\sum_{x\in \Omega}\Emat_i(u,x)\Emat_j(v,x)\overline{\Emat_k(w,x)}$ for $u,v,w\in \Omega$. 
Note that 
{\allowdisplaybreaks
\begin{align*}
q(u,v,w)&=\sum_{x\in \Omega}\Emat_i(u,x)\Emat_j(v,x)\overline{\Emat_k(w,x)}\\
&=\sum_{x\in \Omega}(\sum_{r=0}^d \frac{1}{|\Omega|}\Qmat_{ri}\Amat_r(u,x))(\sum_{s=0}^d \frac{1}{|\Omega|}\Qmat_{sj}\Amat_s(v,x))(\sum_{t=0}^d \frac{1}{|\Omega|}\overline{\Qmat_{tk}}\Amat_t(w,x))\\
&=\frac{1}{|\Omega|^3}\sum_{r,s,t=0}^d \Qmat_{ri}\Qmat_{sj}\overline{\Qmat_{tk}}(\sum_{x\in \Omega} \Amat_r(u,x)\Amat_s(v,x)\Amat_t(w,x))\\
&=\frac{1}{|\Omega|^3}\sum_{r,s,t=0}^d \Qmat_{ri}\Qmat_{sj}\overline{\Qmat_{tk}}\sV{u & v & w}{r & s & t}. 
\end{align*}}

Since the $\Emat_\ell$ are Hermitian idempotent matrices,  
\[
\Emat_\ell(x,y)=\sum_{z\in \Omega} \Emat_\ell(x,z)\Emat_\ell(z,y)=\sum_{z\in \Omega} \overline{\Emat_\ell(z,x)}\Emat_\ell(z,y) 
\]
for any $\ell$. 
Then by $\Emat_{k}^\top=\overline{\Emat_k}$, 
{\allowdisplaybreaks
\begin{align*}
&\sum_{x,y\in \Omega}(\Emat_i\circ \Emat_j \circ \Emat_{k}^\top)(x,y)\\
=&\sum_{x,y\in \Omega}(\Emat_i\circ \Emat_j \circ \overline{\Emat_{k}})(x,y)\\
=&\sum_{x,y\in \Omega}\Emat_i(x,y) \Emat_j(x,y) \overline{\Emat_k(x,y)}\\
=&\sum_{x,y\in \Omega}(\sum_{u\in \Omega} \overline{\Emat_i(u,x)}\Emat_i(u,y)) (\sum_{v\in \Omega} \overline{\Emat_j(v,x)}\Emat_j(v,y))(\sum_{w\in \Omega} \Emat_k(w,x)\overline{\Emat_k(w,y)})\\
=&\sum_{u,v,w\in \Omega}(\sum_{x\in \Omega} \overline{\Emat_i(u,x)}\overline{\Emat_j(v,x)}\Emat_k(w,x))(\sum_{y\in \Omega}\Emat_i(u,y)\Emat_j(v,y)\overline{\Emat_k(w,y)}))\\
=&\sum_{u,v,w\in \Omega}\overline{q(u,v,w)}q(u,v,w)\geq 0. 
\end{align*}}
On the other hand, since $\sum_{x,y\in \Omega} (\mathsf{A}\circ \mathsf{B})(x,y)=\text{tr}(\mathsf{AB}^\top)$, we have 
\begin{align*}
\sum_{x,y\in \Omega}(\Emat_i\circ \Emat_j \circ \Emat_{k}^\top)(x,y)&=\text{tr}((\Emat_i\circ \Emat_j)\Emat_k)=\text{tr}(\frac{1}{|\Omega|}\sum_{\ell=0}^dq_{ij}^{\ell} \Emat_{\ell}\Emat_k)\\
&=\text{tr}(\frac{1}{|\Omega|}q_{ij}^{k}\Emat_{k})=\frac{1}{|\Omega|}m_k q_{ij}^{k}.
\end{align*}
Therefore we have that $q_{ij}^{k}=0$ if and only if $q(u,v,w)=0$ for all $u,v,w\in \Omega$ as desired.  
\end{proof}

\section{Constructing roux matrices from association schemes}\label{sec:construction}

The following is a simple criteria for checking if a matrix is a roux matrix:

\begin{lem}[{\cite[Lemma 2.3]{IversonMixon}}]\label{lem:RouxSimple}
    Suppose $\Rmat \in \mathbb{C}[G]^{n \times n}$ satisfies \emph{(R1)}, \emph{(R2)}, and \emph{(R3)} of Definition \ref{def:rouxmat}. Then $\Rmat$ is a roux matrix if and only if
    \[
    \Rmat^2 = (n-1)\Imat + \sum_{g \in G} c_g g \Rmat
    \]
for some complex numbers $\{c_g\}_{g \in G}$, called the \emph{roux parameters}. In this case $\{c_g\}_{g \in G}$ are nonnegative integers that sum to $n-2$, with $c_{g^{-1}}=c_g$ for every $g \in G$.
\end{lem}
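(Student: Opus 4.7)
The plan is to analyze $\Rmat^2$ entry-by-entry, which settles both directions cleanly, and then to read off the properties of the $c_g$ from the off-diagonal entries together with a Hermitian-type symmetry extracted from (R3).

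For the forward direction, assume $\Rmat$ is a roux matrix. By (R4) the span $V := \spann_{\C}\{g\Imat, g\Rmat : g \in G\}$ is closed under products, so $\Rmat^2 \in V$; write $\Rmat^2 = \sum_g a_g g\Imat + \sum_g c_g g\Rmat$. The $(i,i)$-entry of $\Rmat^2$ is $\sum_{k}\Rmat_{ik}\Rmat_{ki}$; (R1) eliminates the $k=i$ term, and (R3) collapses each remaining summand to $\Rmat_{ik}\Rmat_{ik}^{-1}=e$, giving $(\Rmat^2)_{ii}=(n-1)\,e$. Since each $g\Rmat$ has zero diagonal, equating the $(i,i)$-entry with $\sum_g a_g g$ in $\C[G]$ forces $a_e = n-1$ and $a_g = 0$ for $g \neq e$. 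Conversely, if $\Rmat^2 = (n-1)\Imat + \sum_g c_g g\Rmat$, then all products among generators of $V$ stay in $V$: those involving $\Imat$ are trivial, and $(g\Rmat)(h\Rmat) = gh\,\Rmat^2 = (n-1)gh\,\Imat + \sum_f c_f\, ghf\,\Rmat \in V$, verifying (R4).

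For the parameter claims, I would look at the off-diagonal entries. For $i \neq j$, the $(i,j)$-entry of the roux identity, after invoking (R1), reads $\sum_{k \neq i,j}\Rmat_{ik}\Rmat_{kj} = \sum_g c_g\, g\,\Rmat_{ij}$ in $\C[G]$. Setting $h := \Rmat_{ij} \in G$ and re-indexing the right side as $\sum_{g'}c_{g'h^{-1}}g'$, the left side is a formal sum of $n-2$ group elements; matching coefficients identifies $c_g$ with the nonnegative integer $\#\{k : \Rmat_{ik}\Rmat_{kj} = gh\}$, and summing over $g'$ gives $\sum_g c_g = n-2$. The same argument also shows that the expansion of $\Rmat^2$ in the roux identity is unique.

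For $c_g = c_{g^{-1}}$, I would introduce the involution $\Mmat \mapsto \Mmat^*$ on $\C[G]^{n\times n}$ defined by $(\Mmat^*)_{ij} := \overline{\Mmat_{ji}}$, where $\overline{\sum_g \lambda_g g} := \sum_g \overline{\lambda_g}\, g^{-1}$. By (R3) one has $(\Rmat^*)_{ij} = \Rmat_{ji}^{-1} = \Rmat_{ij}$ for $i \neq j$, hence $\Rmat^* = \Rmat$ and so $(\Rmat^2)^* = (\Rmat^*)^2 = \Rmat^2$. Applying $*$ to the roux identity, using $c_g \in \mathbb{R}$ (established above) together with $(g\Rmat)^* = g^{-1}\Rmat$, rewrites the right side as $(n-1)\Imat + \sum_g c_g\, g^{-1}\Rmat$, and uniqueness of the expansion yields $c_g = c_{g^{-1}}$. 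I expect the main obstacle to be setting up this involution cleanly and verifying its compatibilities, since one must carefully interlock entrywise group inversion in $\C[G]$ with the matrix transpose.
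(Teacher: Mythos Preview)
The paper does not supply its own proof of this lemma: it is quoted verbatim from \cite[Lemma~2.3]{IversonMixon} and used as a black box, so there is no in-paper argument to compare against. Judged on its own, your proof is correct. The diagonal computation pins down the $\Imat$-coefficients, the off-diagonal computation both establishes (R4) in the converse direction and extracts the integrality and sum of the $c_g$, and the $*$-involution argument cleanly gives $c_{g^{-1}}=c_g$. One small point worth making explicit is that $*$ is an antihomomorphism, $(\Mmat\mathsf{N})^* = \mathsf{N}^*\Mmat^*$, so that $(\Rmat^2)^* = (\Rmat^*)^2$ is justified; since $G$ is abelian this causes no difficulty, but it is the step where the ``careful interlocking'' you anticipate actually occurs.
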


The key result of this section is the following,

\begin{thm}\label{lem:1130a}
Let $G$ be an abelian group of order $r$, and let $\cY$ be an $r$-class association scheme on $n-1\geqslant 2$ points
whose non-diagonal relations are $\Rrel_g$ ($g\in G$).
Define
\begin{equation}\label{1129a}
\tA_g=\begin{cases}
\begin{bmatrix}
0&\allone_{n-1}\\ \allone_{n-1}^\top&\Amat_1
\end{bmatrix}
&\text{if $g=1$,}\\
\begin{bmatrix}
0&\allzero_{n-1} \\ \allzero_{n-1}^\top&\Amat_g
\end{bmatrix}
&\text{if $g\in G\setminus\{1\}$,}
\end{cases}
\end{equation}
where $\Amat_g$ is the adjacency matrix of $\Rrel_g$ for $g\in G$,
and $\allone_{n-1}$ and $\allzero_{n-1}$ are row vectors with dimension $n-1$ with all $1$ and all $0$ entries, respectively.
The matrix 
\[
\Rmat = \sum_{g \in G} g \tA_g
\]
is a roux matrix if and only if both
\begin{equation}\label{1127a}
\Rrel_g^\top=\Rrel_{g^{-1}}
\end{equation}
and the intersection numbers 
of $\cY$
satisfy
\[
\sum_{g\in G} p_{g^{-1},gh}^m = k_{h^{-1}m} - \delta_{h,1}.
\]
for all $h, m \in G$. In such a case the roux parameters are the valencies $\{k_g\}_{g \in G}$ 
of $\cY$.
\end{thm}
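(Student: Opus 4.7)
The plan is to apply Lemma \ref{lem:RouxSimple}. Writing $n := N + 1$ where $N$ is the number of vertices of $\cY$, the lemma reduces the roux condition to verifying (R1)--(R3) together with an equation $\Rmat^2 = N\Imat + \sum_{g\in G} c_g\, g\Rmat$ for some scalars $c_g$. Properties (R1) and (R2) follow directly from the block form \eqref{1129a}: each $\tA_g$ has zero diagonal, every off-diagonal entry of $\Rmat$ in the bordering row/column equals $1_G$ (contributed solely by $\tA_1$), and in the principal block it is the unique $g \in G$ labelling the corresponding pair in $\cY$. Property (R3) is trivial on the border, and in the principal block it translates exactly to the condition $\Rrel_g^\top = \Rrel_{g^{-1}}$, which is \eqref{1127a}.

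Assuming \eqref{1127a} (so that $\Amat_g^\top = \Amat_{g^{-1}}$, hence $k_g = k_{g^{-1}}$ and $\allone\Amat_g = k_g \allone = \Amat_g \allone^\top$), I would expand $\Rmat^2 = \sum_{m\in G} m\,\Bmat_m$ with $\Bmat_m := \sum_{g\in G}\tA_g \tA_{g^{-1}m}$ and carry out the block multiplication case-by-case ($g=1$, $g^{-1}m=1$, or neither). The border of $\Bmat_m$ becomes $(k_m \allone,\, k_m \allone^\top)$, the top-left corner is $N\delta_{m,1}$, and the principal block is $\delta_{m,1}\Jmat + \sum_{g\in G}\Amat_g \Amat_{g^{-1}m}$. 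Rewriting the right-hand side as $\sum_m m\bigl(N\delta_{m,1}\Imat + \sum_g c_g \tA_{g^{-1}m}\bigr)$ and comparing border entries immediately forces $c_g = k_g$ for every $g \in G$, establishing the roux parameters.

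To identify the remaining conditions, I would expand $\Amat_g\Amat_{g^{-1}m} = k_g\delta_{m,1}\Imat + \sum_\ell p_{g,g^{-1}m}^\ell \Amat_\ell$, absorb $\delta_{m,1}\Jmat = \delta_{m,1}(\Imat + \sum_\ell \Amat_\ell)$, and compare coefficients of each $\Amat_\ell$ in the principal-block identity with $c_g = k_g$ substituted. After the substitution $g \mapsto g^{-1}$ and a relabelling using the abelianness of $G$, the family of scalar equations assembles precisely into the stated identity $\sum_g p_{g^{-1},gh}^m = k_{h^{-1}m} - \delta_{h,1}$.

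The main obstacle is the bookkeeping. The $-\delta_{h,1}$ correction must be traced to the interaction between the $\Imat$-contribution of $\sum_g \Amat_g \Amat_{g^{-1}}$ (which yields $(\sum_g k_g)\Imat = (N-1)\Imat$) and the $N\Imat$ from Lemma \ref{lem:RouxSimple} together with the $\Imat$ hidden inside $\Jmat$; likewise, rewriting the naturally appearing sum $\sum_g p_{g,g^{-1}m}^\ell$ as $\sum_g p_{g^{-1},gh}^m$ requires care with the change of summation variable. The matching of the $\Imat$-coefficients on both sides is subsumed by the case $h=1$ of the stated identity, so no further condition arises.
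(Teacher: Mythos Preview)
Your proposal is correct and follows essentially the same route as the paper: verify (R1)--(R3) directly, then invoke Lemma~\ref{lem:RouxSimple} and compare the block entries of $\Rmat^2$ with those of $(n-1)\Imat+\sum_g c_g\,g\Rmat$, forcing $c_g=k_g$ from the border and the intersection-number identity from the principal block. The only difference is organizational---you collect $\Rmat^2=\sum_m m\,\Bmat_m$ by group element first, whereas the paper carries out the block product in $\C[G]$ before collecting---but the computations and the final change of variables are the same.
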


\begin{proof}
It is clear that the multiplication $\tA_h \tA_k$ can be expressed using intersection numbers of the scheme $\cY$. 
Properties (R1) and (R2) clearly hold, and
(R3) holds if and only if $\Rrel_g^\top=\Rrel_{g^{-1}}$. We will apply Lemma \ref{lem:RouxSimple} to prove that (R4) holds if and only if the intersection numbers satisfy the equation in the theorem.
In the following, we omit the subscript from $\allone_{n-1}$ and $\allzero_{n-1}$ for clarity.
Observe that
\[
\Rmat = \sum_{g \in G} g \tA_g = \tA_1 + \sum_{g \neq 1} g \tA_g =
\begin{bmatrix}
0&\allone\\ \allone^\top&\Amat_1
\end{bmatrix}
+
\begin{bmatrix}
0& \allzero \\ \allzero^\top &\sum_{g \neq 1} g \Amat_g
\end{bmatrix}.
\]
Let $\Omega$ denote the point set of $\cY$. 
Computing $\Rmat^2$ we obtain
{\allowdisplaybreaks
\begin{align*}
\Rmat^2 = & \begin{bmatrix}
0&\allone\\ \allone^\top&\Amat_1
\end{bmatrix}^2
+
\begin{bmatrix}
0&\allone\\ \allone^\top&\Amat_1
\end{bmatrix}
\begin{bmatrix}
0& \allzero \\ \allzero^\top &\sum_{g \neq 1} g \Amat_g
\end{bmatrix} \\
& +
\begin{bmatrix}
0& \allzero \\ \allzero^\top &\sum_{h \neq 1} h \Amat_h
\end{bmatrix}
\begin{bmatrix}
0&\allone\\ \allone^\top&\Amat_1
\end{bmatrix}
+
\begin{bmatrix}
0& \allzero \\ \allzero^\top &\sum_{g \neq 1} g \Amat_g
\end{bmatrix}
\begin{bmatrix}
0& \allzero \\ \allzero^\top &\sum_{h \neq 1} h \Amat_h
\end{bmatrix}\\
=&
\begin{bmatrix}
|\Omega| &k_1 \allone\\ k_1\allone^\top&\Jmat + \Amat_1^2
\end{bmatrix}
+
\begin{bmatrix}
0&\sum_{g\neq 1}k_g g \allone \\ \allzero^\top& \sum_{g\neq1}g\Amat_1\Amat_g
\end{bmatrix} \\
& +
\begin{bmatrix}
0&\allzero \\ \sum_{h\neq1} k_h h\allone^\top & \sum_{h\neq1} h \Amat_h \Amat_1
\end{bmatrix}
 +
 \begin{bmatrix}
0&\allzero \\ \allzero^\top &\sum_{g\neq1, h\neq1} gh \Amat_g\Amat_h
\end{bmatrix}\\
= & 
\begin{bmatrix}
|\Omega| &\sum_{g\in G} k_g g \allone\\ \sum_{g \in G} k_g g \allone^\top&\Jmat + \sum_{g,h \in G}gh \Amat_g \Amat_h
\end{bmatrix}.
\end{align*}}
Note
{\allowdisplaybreaks
\begin{align*}
\sum_{g,h \in G} gh\Amat_g \Amat_h = & \sum_{g,h \in G, \ell \in G \cup \{0\}} p^\ell_{g,h} gh \Amat_\ell = \sum_{g,h \in G} p^0_{g,h} gh \Amat_0 + \sum_{g,h,\ell \in G} p^\ell_{g,h} gh \Amat_\ell \\
=& \sum_{g \in G} k_g \Imat + \sum_{\ell \in G} \sum_{g,h \in G} p^\ell_{g,h} gh \Amat_\ell = (|\Omega| - 1)\Imat + \sum_{\ell \in G} \sum_{g,h \in G} p^\ell_{g,h} gh \Amat_\ell.
\end{align*}}
Thus by writing $n = |\Omega|+1$ we have
\begin{equation}\label{eq:Bsquared}
\Rmat^2 = (n-1)\Imat +
\begin{bmatrix}
0 &\sum_{g\in G} k_g g \allone\\ \sum_{g \in G} k_g g \allone^\top&\Jmat - \Imat + \sum_{\ell \in G} \sum_{g,h\in G} p^\ell_{g,h} gh \Amat_\ell
\end{bmatrix}.
\end{equation}
Simplifying the bottom right corner of \eqref{eq:Bsquared} gives
{\allowdisplaybreaks
\begin{align}
\Jmat-\Imat + \sum_{\ell \in G} \sum_{g,h \in G} p^\ell_{g,h} gh \Amat_\ell \nonumber
=& \Jmat-\Imat + \sum_{\ell \in G} \sum_{g, h \in G} p^\ell_{g,h\ell} gh\ell \Amat_\ell\\ \nonumber
=&\Jmat-\Imat + \sum_{\ell \in G} \sum_{g', h \in G} p_{g'h^{-1}, h\ell}^\ell g'\ell \Amat_\ell\\ \nonumber
=& \sum_{\ell,g \in G} \delta_{g\ell, 1} g\ell \Amat_\ell + \sum_{\ell, g \in G} \sum_{h \in G} p^\ell_{gh^{-1}, h\ell} g\ell \Amat_\ell\\
=& \sum_{\ell, g \in G} (\delta_{g\ell, 1} + \sum_{h \in G} p_{gh^{-1}, h\ell}^\ell) g\ell \Amat_\ell. \label{eq:simplified}
\end{align}}

Since
\[
g\Rmat = 
\begin{bmatrix}
0&g \allone \\ g \allone^\top & g \Amat_1
\end{bmatrix}
+
\begin{bmatrix}
0& \allzero \\ \allzero^\top & \sum_{\ell \neq 1} g \ell \Amat_\ell
\end{bmatrix}
=
\begin{bmatrix}
0&g \allone \\ g \allone^\top & 0
\end{bmatrix}
+
\begin{bmatrix}
0& \allzero \\ \allzero^\top & \sum_{\ell \in G} g \ell \Amat_\ell
\end{bmatrix}
\]
it follows that $c_g = k_g$ for all $g \in G$ if \eqref{eq:Bsquared} is to satisfy the equation in Lemma \ref{lem:RouxSimple}, and so we have
\begin{equation} \label{eq:combo}
\sum_{g \in G} k_g g \Rmat = 
\begin{bmatrix}
0& \sum_{g \in G} k_g g \allone \\ \sum_{g \in G} k_g g \allone^\top & 0
\end{bmatrix}
+
\begin{bmatrix}
0& \allzero \\ \allzero^\top & \sum_{\ell, g \in G} k_g g \ell \Amat_\ell
\end{bmatrix}.
\end{equation}

Comparing and equating \eqref{eq:simplified} and \eqref{eq:combo}
\[
\Jmat-\Imat + \sum_{\ell \in G} \sum_{g, h \in G} p_{gh}^\ell gh \Amat_\ell = \sum_{\ell, g \in G} k_g g \ell \Amat_\ell
\]
we obtain $\delta_{g\ell, 1} + \sum_{h \in G} p_{g h^{-1}, h\ell}^\ell = k_g$ for all $g, \ell \in G$. Equivalently $\delta_{g\ell, 1} + \sum_{g' \in G} p_{g'^{-1}, g'g\ell}^\ell = k_g$. Equivalently $\sum_{g \in G}p_{g^{-1}, gh}^m = k_{hm^{-1}} - \delta_{h,1}$ for all $h, m \in G$.
\end{proof}

Given a group $G$ and commutative association scheme $\cY$ which produce a roux matrix of the form in Theorem \ref{lem:1130a}, we shall denote the corresponding roux scheme by $\cY \widehat\otimes G$. The reason for this notation is the following observation.
For $k\in G$, let $\Perm_k$ be the permutation matrix whose rows and columns are indexed
by $G$, whose entries are defined by
\[(\Perm_k)_{g,h}=\delta_{gk,h}\quad(h,k\in G).\]
Then
\begin{equation}\label{1128a2}
\Perm_k\Perm_\ell=\Perm_{k\ell}\quad(k,\ell\in G).
\end{equation}

Then for $k\in G$, the adjacency matrices of thick and thin relations of $\cY \widehat\otimes G$ respectively have the structure
\begin{align}
\Bmat_k&=\sum_{\ell\in G} \tA_\ell\otimes \Perm_{k\ell},\label{thickrels}\\
\Cmat_k&=\Imat\otimes \Perm_k. \label{thinrels}
\end{align}

\begin{remark} 
Note that a labeling of the relations of $\cY$ by the elements of $G$ is implicit in the notation $\cY \widehat\otimes G$.
\end{remark}

It is clear that $\Smat_k$ has valency equal to $n-1$, the number of points of $\cY$. Moreover, the roux scheme has $2r-1$ classes and $rn$ points with $r < n-1$ since $\cY$ has $r$ classes. Given that $\Smat_k = \Smat_1 \Tmat_k$ it follows that $\Smat_k\Smat_g = \Smat_1^2 \Tmat_k \Tmat_g$ and so the intersection numbers of the scheme are determined by $\Smat_1^2$ expressed in terms of the adjacency matrices.
One can see that the roux scheme $\cY \widehat\otimes G$ carries an association scheme locally 
with respect to any thick relation 
about some vertex\footnote{In particular, the vertex whose first tensor product component corresponds to the first index in the matrix $\tilde{A}_g$ in \eqref{1129a}.}.
Conversely, 
in Section \ref{sec:Decomp}, 
we show that a roux scheme with 
the latter property can be obtained as $\cY \widehat\otimes G$ (the roux scheme from the roux matrix in Theorem \ref{lem:1130a}).

\subsection{Constructions using elementary abelian groups}

A symmetric $d$-class association scheme with the property that every non-identity relation has the same valency $k$, and $\sum_{i}p_{ii}^j = k-1$ for $1 \leqslant j \leqslant d$, is called \emph{pseudocyclic} \cite[Proposition 2.2.7]{BCN}. A symmetric association scheme with the property that any of its fusions is also an association
scheme is called \emph{amorphic}.

\begin{lem}\label{lem:pseudocyclicamorphic}
Let $\cY$ be an amorphic pseudocyclic association scheme with $r$ classes and let $G$ be an elementary abelian $2$-group of order $r$.
Then $\cY$ and $G$ satisfy the conditions of Theorem \ref{lem:1130a} to form a roux matrix.
\end{lem}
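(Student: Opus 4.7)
The plan is to verify the two hypotheses of Theorem~\ref{lem:1130a}.

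Condition~\eqref{1127a} is immediate: because $G$ is an elementary abelian $2$-group, $g^{-1} = g$ for every $g \in G$, and because $\cY$ is amorphic (hence symmetric), $\Rrel_g^\top = \Rrel_g = \Rrel_{g^{-1}}$. After also using $h^{-1} = h$ and pseudocyclicity (so $k_s = k$ for every $s \in G$), the intersection-number condition reduces to showing
\[
\sum_{g \in G} p_{g, gh}^m = k - \delta_{h, 1} \qquad (h, m \in G).
\]

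My plan is to translate this into a matrix identity in the Bose-Mesner algebra of $\cY$, namely
\[
\sum_{g \in G} \Amat_g \Amat_{gh} = \begin{cases} (n-k)\Imat + (k-1)\Jmat, & h = 1, \\ k(\Jmat - \Imat), & h \neq 1, \end{cases}
\]
and then read off the coefficient of each $\Amat_m$. Amorphicity makes every $\Rrel_g$ a strongly regular graph, and combined with pseudocyclicity the $\Rrel_g$'s share a common parameter tuple $(n,k,\lambda,\mu)$; likewise every pairwise fusion $\Rrel_g \cup \Rrel_{gh}$ (with $g \neq gh$) is an SRG with common parameters $(n, 2k, \lambda_2, \mu_2)$. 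The usual SRG identity yields $\lambda + (r-1)\mu = k-1$ and $\lambda_2 + (r/2-1)\mu_2 = 2k-1$.

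The case $h = 1$ is then a one-line computation: substituting $\Amat_g^2 = k\Imat + \lambda \Amat_g + \mu(\Jmat - \Imat - \Amat_g)$, summing over $g$ using $\sum_g \Amat_g = \Jmat - \Imat$, and invoking $\lambda + (r-1)\mu = k-1$ produces $(n-k)\Imat + (k-1)\Jmat$. For $h \neq 1$, the pairs $\{g, gh\}$ traverse the cosets of $\{1,h\} \leq G$; expanding $(\Amat_g + \Amat_{gh})^2 = \Amat_g^2 + 2\Amat_g\Amat_{gh} + \Amat_{gh}^2$ together with the SRG formulas for the fusion and for individual classes solves for $\Amat_g\Amat_{gh}$, and summing over $g \in G$ should collapse everything (using both SRG identities) down to $k(\Jmat - \Imat)$.

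The main obstacle is justifying that $(\lambda,\mu)$ and $(\lambda_2, \mu_2)$ are independent of the chosen class or pair. This is where I would invoke van Dam's structure theorem for amorphic schemes (every amorphic scheme with at least three classes is of Latin-square or negative-Latin-square type), which combined with pseudocyclicity forces all classes to be parametrically identical as SRGs; the same argument applied to any fixed-size fusion then gives the uniformity of $(\lambda_2, \mu_2)$. The small cases $r \in \{1,2\}$ (where van Dam's theorem does not apply) can be verified directly, since for $r=2$ the scheme is a conference SRG and its complement.
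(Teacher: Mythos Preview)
Your argument is correct but takes a different route from the paper. The paper simply quotes the explicit intersection numbers of an amorphic pseudocyclic scheme from \cite[Corollary~1]{vanDamMuzychuk},
\[
k = g(rg \mp 2),\quad p_{ii}^i = g^2 - 1 \pm g(r-3),\quad p_{ii}^j = g(g\mp 1),\quad p_{ij}^k = g^2 \ \ (i,j,k \text{ distinct}),
\]
and checks the identity $\sum_{g} p_{g,gh}^m = k - \delta_{h,1}$ by direct substitution (two short lines, one for $h=1$ and one for $h\neq 1$). Your approach instead lifts the identity to the Bose--Mesner algebra, proving the matrix equations $\sum_g \Amat_g^2 = (n-k)\Imat + (k-1)\Jmat$ and $\sum_g \Amat_g\Amat_{gh} = k(\Jmat-\Imat)$ via the SRG relations for the classes and their pairwise fusions, together with the pseudocyclic consequence $\lambda + (r-1)\mu = k-1$ (and its analogue for the $r/2$-class fusion).

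Both arguments ultimately rest on the same structural fact---van Dam's theorem that amorphic schemes with $\geq 3$ classes are of (negative) Latin-square type---which in the paper delivers the closed-form $p_{ij}^k$, and in your version is what guarantees the uniformity of $(\lambda,\mu)$ and $(\lambda_2,\mu_2)$ that your sums require. Your route is a bit longer but more coordinate-free: you never write down the actual values of $p_{ij}^k$, only the SRG identity $(n-k-1)\mu = k(k-\lambda-1)$ applied to each class (resp.\ fusion class). The paper's route is shorter precisely because, once van Dam--Muzychuk is invoked, the full intersection array is already on the table and there is no reason not to use it directly.
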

\begin{proof}
For an amorphic pseudocyclic association scheme \cite[Corollary 1]{vanDamMuzychuk}, 
one has
\begin{align*}
k &= g(rg \mp 2), & p_{ii}^i &= g^2 - 1 \pm g(r-3),& p_{ii}^j &= g(g\mp 1), & p_{ij}^k &= g^2,
\end{align*}
for $1 \leqslant i, j, k \leqslant r$ and $i \neq j$, $i \neq k$, $j \neq k$, where the upper sign corresponds to Latin square type and the lower sign corresponds to negative Latin square type. Moreover, $g = g^{-1}$ for all $g\in G$ since $G$ is an elementary abelian $2$-group. The left hand side of the equation in Theorem \ref{lem:1130a} then evaluates to
\[
(r-2)g^2 + 2g(g\mp 1) = g^2r\mp 2g
\]
for $h \neq 1$ and 
\[
(r-1)g(g\mp1) + g^2 -1 \pm g (r-3) = g^2r \mp 2g - 1
\]
for $h = 1$, matching the expression on the right hand side of the equation. Note that this holds for any ordering of elements of $G$.
\end{proof}

Note, for any $s \in \mathbb{Z}_+$ and $r=2^s$, there are infinitely many primes $p$ satisfying $p \equiv -1 \pmod{r}$ (by Dirichlet's theorem on arithmetic progressions) for which there exists a cyclotomic scheme over $\mathbb{F}_{p^n}$ with $r$ classes, where $n$ is an even positive integer. This scheme is amorphic by \cite[Proposition 3]{vanDamMuzychuk} and hence there are infinitely many examples arising from Lemma \ref{lem:pseudocyclicamorphic}.

We have the following corollary as a result of \cite[Theorem 4.2(d)]{IversonMixon} (which is a special case of \cite{Mathon}, c.f. \cite[Corollary 12.7.2]{BCN}),
\begin{cor}
Let $r$ be a power of $2$, and suppose that there exists an amorphic pseudocyclic association scheme with $r$ classes on $n$ points. Then there exists a DRACKN with parameters $(n+1,r,\frac{n-1}{r})$.
\end{cor}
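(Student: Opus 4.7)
The plan is short and largely consists of assembling pieces already in hand. First I would take $G$ to be the elementary abelian $2$-group of order $r$, which exists because $r$ is a power of $2$, and let $\cY$ be the given amorphic pseudocyclic association scheme with $r$ classes on $n$ points. Lemma \ref{lem:pseudocyclicamorphic} tells us that the pair $(\cY,G)$ satisfies the hypotheses of Theorem \ref{lem:1130a}, and so $\Rmat=\sum_{g\in G} g\tA_g$ is a roux matrix of size $(n+1)\times(n+1)$, yielding the roux scheme $\cY\widehat\otimes G$ on $r(n+1)$ vertices.

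Next I would read off the roux parameters. By Theorem \ref{lem:1130a} these are precisely the valencies $\{k_g\}_{g\in G}$ of $\cY$; since $\cY$ is pseudocyclic they all coincide with a common value $k$, and since they sum to $n-1$ we obtain $k=(n-1)/r$. I would then invoke \cite[Theorem 4.2(d)]{IversonMixon}, which is designed exactly to translate a roux scheme with all roux parameters equal into a DRACKN. The $n+1$ row/column indices of the roux matrix correspond to the fibres of the cover, each of cardinality $|G|=r$, and the common roux parameter $k=(n-1)/r$ gives the number of common neighbours, producing a DRACKN with parameters $(n+1,\, r,\, (n-1)/r)$.

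The only content beyond bookkeeping is verifying that the conclusion of \cite[Theorem 4.2(d)]{IversonMixon} lines up with the claimed DRACKN parameters; once the roux parameters have been identified as all being equal to $(n-1)/r$ and $G$ as an elementary abelian $2$-group, this matching is automatic from the form of the cited theorem, and no further computation with the intersection numbers of $\cY$ is needed.
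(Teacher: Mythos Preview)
Your proposal is correct and follows essentially the same route as the paper: the paper's entire argument is the single sentence ``We have the following corollary as a result of \cite[Theorem 4.2(d)]{IversonMixon}'', relying on Lemma~\ref{lem:pseudocyclicamorphic} to produce the roux matrix and on the pseudocyclic hypothesis to make all roux parameters equal to $(n-1)/r$. You have simply spelled out the bookkeeping that the paper leaves implicit.
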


\subsection{Constructions using cyclic groups}

We are aware of the following three examples which use cyclic groups to construct roux schemes. We assume that the relations of $\cY$ are labeled by $R_i = R_{g^i}$ when we refer to $\cY \widehat{\otimes} G$.

\begin{exmp}
Let $\cY$ be the unique association scheme on $8$ vertices with the eigenmatrix
\[\begin{bmatrix}
1 & 1 & 3 & 3\\
1 & -1 & \sqrt{3}i & -\sqrt{3}i\\ 
1 & -1 & -\sqrt{3}i & \sqrt{3}i\\
1 & 1 & -1 & -1 
\end{bmatrix}.\]
In Hanaki's database \cite{Hanaki}, $\cY$ has identification number $6$. It is Schurian with automorphism group isomorphic to ${\rm SL}(2,3)$. Then $\cY$ and $\mathbb{Z}_3$ together satisfy Theorem \ref{lem:1130a}. Moreover, $\cY \widehat\otimes \mathbb{Z}_3$ is the unique association scheme on $27$ vertices with eigenmatrix
\[\begin{bmatrix}
1 & 1 & 1 & 8 & 8 & 8\\
1 & \zeta & \zeta^2 & -4 & -4\zeta & -4\zeta^2\\
1 & \zeta^2 & \zeta & -4 & -4\zeta^2 & -4\zeta \\ 
1 & \zeta & \zeta^2 & 2 & 2\zeta & 2\zeta^2\\
1 & \zeta^2 & \zeta & 2 & 2\zeta^2 & 2\zeta\\
1 & 1 & 1 & -1 & -1 & -1
\end{bmatrix},\]
where $\zeta$ is a third root of unity, and has identification number $403$ in Hanaki's database \cite{Hanaki}. It is the association scheme that arises from the set of $9$ equiangular lines in $\mathbb{C}^3$ in the same way as Hoggar's lines produces the association scheme $\mathcal{H}$ in Section \ref{sec:intro}.
\end{exmp}

\begin{exmp}
Let $\mathcal{O}$ be an ovoid of 
the Hermitian polar space $H(3,4)$ constructed by taking the intersection with non-degenerate hyperplane of $PG(3,4)$. The lines of $H(3,4)$ form a $3$-class association scheme with the following relations:
\begin{itemize}
    \item $R_1$: Two distinct lines meet in a point not in $\mathcal{O}$.
    \item $R_2$: Two distinct lines are disjoint.
    \item $R_3$: Two distinct lines meet in a point of $\mathcal{O}$. 
\end{itemize}
This scheme is metric, and $R_1$ defines a distance-regular graph. There is a fission of this association scheme into a $4$-class scheme by splitting $R_2$ in half, which has the eigenmatrix
\[\begin{bmatrix}
1&  2&         8&   8&        8\\
1&  2&        -1&  -1&       -1\\
1& -1&         2&  -4&        2\\
1& -1& -1-3i    &   2&      -1+3i\\
1& -1& -1+3i    &   2&      -1-3i
\end{bmatrix}.\]
Let $\cY$ be this fission scheme. It is the unique association scheme on $27$ vertices with identification number $395$ in Hanaki's database \cite{Hanaki}. It is also Schurian.
Then $\cY$ and $\mathbb{Z}_4$ together satisfy Theorem \ref{lem:1130a}.
\end{exmp}

\begin{exmp} \label{ex:hex}
The collinearity graph of the split Cayley generalised hexagon, $H(2)$, is diameter $3$ distance-regular graph. Equivalently, it is a $3$-class metric association scheme. There exists a $4$-class fission of this scheme, by splitting the distance $3$ relation. It is Schurian with automorphism group isomorphic to ${\rm PSU}(3,3)$ and with eigenmatrix
\begin{align}\label{eq:HexFissionP}
\begin{bmatrix}
1&6&16&24&16\\
1&3&-2&0&-2\\
1&-1&2&-4&2\\
1&-3&-2+6i&6&-2-6i\\
1&-3&-2-6i&6&-2+6i
\end{bmatrix}.
\end{align}
Note that ${\rm PSU}(3,3)$ has two subgroups of index $63$ up to conjugacy. One of them \cite[p.14]{ATLAS} is the stabiliser of a pairwise orthogonal set of three non-isotropic points, i.e. a ``basis''. 
The orbitals of the action of ${\rm PSU}(3,3)$ on the $63$ bases produces this $4$-class fission scheme, which we denote by $\cY$. Then $\cY$ and $\mathbb{Z}_4$ together satisfy Theorem \ref{lem:1130a}. Moreover, $\cY \widehat\otimes \mathbb{Z}_4$ is the association scheme $\mathcal{H}$ on $256$ vertices arising from Hoggar's lines (see Section \ref{sec:intro}) with eigenmatrix
\begin{align}\label{eq:2}
\left[
\begin{array}{cccccccc}
1 & 1 & 1 & 1 & 63 & 63 & 63 & 63\\
1 & i & -1 & -i & 21 & 21i & -21 & -21i\\
1 & -1 & 1 & -1 & -9 & 9 & -9 & 9\\
1 & -i & -1 & i & 21 & -21i & -21 & 21i\\
1 & 1 & 1 & 1 & -1 & -1 & -1 & -1\\
1 & i & -1 & -i & -3 & -3i & 3 & 3i\\
1 & -1 & 1 & -1 & 7 & -7 & 7 & -7\\
1 & -i & -1 & i & -3 & 3i & 3 & -3i
\end{array}
\right].
\end{align}
We shall see in Theorem \ref{thm:1} that it is the unique scheme with this eigenmatrix.
\end{exmp}

\begin{remark}
    Note that the collinearity graph of the dual split Cayley hexagon is also distance-regular with the same parameters as that in Example \ref{ex:hex}, however it does not  admit a $4$-class fission scheme with this eigenmatrix. We will see in Theorem \ref{thm:2} that the fission scheme $\cY$ in Example \ref{ex:hex} is unique up to isomorphism.
\end{remark}

\section{Characterising roux schemes by their eigenmatrices}\label{sec:EigenCharacterisation}
In this section we provide the eigenmatrices for a roux scheme. We note that the primitive idempotents of general roux schemes are given (after suitable scaling)  in \cite[Theorem 2.8]{IversonMixon} and so the information in this section is essentially known. However it is useful to have explicitly stated in the form of the  eigenmatrices of the association scheme. Moreover, we then show that this eigenmatrix structure characterises roux schemes.

\begin{thm}[Eigenmatrices]\label{thm:Pmat}
Let $\Rmat\in \mathbb{C}[G]^{n \times n}$ 
be a roux matrix for a
group $G$.
The first and second eigenmatrices of a roux scheme are respectively
\begin{equation}\label{eq:mat}
\left[\begin{array}{@{}c|c@{}}
  \begin{matrix}
   & \vdots & \\
 \hdots  & \overline{\alpha(g)} &\hdots  \\
   & \vdots & \\
  \end{matrix}
  &  
    \begin{matrix}
   & \vdots & \\
 \hdots  & (n-1)\mu_\alpha^+ \overline{\alpha(g)} & \hdots \\
   & \vdots & \\
  \end{matrix}
   \\
\hline
  \begin{matrix}
   & \vdots & \\
 \hdots  & \overline{\alpha(g)} & \hdots \\
   & \vdots & \\
  \end{matrix}
 &
  \begin{matrix}
   & \vdots & \\
  \hdots & (n-1)\mu_\alpha^- \overline{\alpha(g)} & \hdots \\
   & \vdots & \\
  \end{matrix}
\end{array}\right],\quad
\left[\begin{array}{@{}c|c@{}}
  \begin{matrix}
   & \vdots & \\
 \hdots  & \lambda_\alpha^+\alpha(g) &\hdots  \\
   & \vdots & \\
  \end{matrix}
  &  
    \begin{matrix}
   & \vdots & \\
 \hdots  & \lambda_\alpha^- \alpha(g) & \hdots \\
   & \vdots & \\
  \end{matrix}
   \\
\hline
  \begin{matrix}
   & \vdots & \\
 \hdots  & \lambda_\alpha^+ \mu_\alpha^+ \alpha(g) & \hdots \\
   & \vdots & \\
  \end{matrix}
 &
  \begin{matrix}
   & \vdots & \\
  \hdots & \lambda_\alpha^- \mu_\alpha^- \alpha(g) & \hdots \\
   & \vdots & \\
  \end{matrix}
\end{array}\right],
\end{equation}
where $g \in G$ determine columns of the first eigenmatrix and rows of the second, $\alpha \in \widehat{G}$ (the character group of $G$) determine rows of the first eigenmatrix and columns of the second, and
\[
\widehat{c}_\alpha := \sum_{h \in G} c_h  
\overline{\alpha(h)}, \quad
\mu_\alpha^\epsilon := \frac{\widehat{c}_\alpha + \epsilon \sqrt{(\widehat{c}_\alpha)^2 + 4 (n-1)}}{2(n-1)},\quad
\lambda_\alpha^\epsilon := \frac{n}{2+ \widehat{c}_\alpha \mu_\alpha^\epsilon}
\]
for $\epsilon \in \{+, -\}$ and roux parameters $\{c_g\}_{g \in G}$.
\end{thm}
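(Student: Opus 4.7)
My plan is to diagonalise the roux scheme by first decomposing $\mathbb{C}^{rn}$ under the $G$-action and then reducing to an $n\times n$ eigenvalue problem on each isotypic component.

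Write $\Rmat=\sum_{g\in G}g\Rmat_g$, where each $\Rmat_g\in\{0,1\}^{n\times n}$ is the indicator of the entries of $\Rmat$ equal to $g$. Then $\lceil g\Imat\rfloor=\Imat_n\otimes\hat g$ and $\lceil g\Rmat\rfloor=\sum_{h\in G}\Rmat_h\otimes\widehat{gh}$, where $\hat g$ is the image of $g$ under the regular representation. Since $G$ is abelian, $\mathbb{C}[G]$ decomposes as $\bigoplus_{\alpha\in\widehat G}\mathbb{C}v_\alpha$ with $\hat g v_\alpha=\overline{\alpha(g)}v_\alpha$ for a suitable choice of $v_\alpha$, so the ambient space splits into $G$-isotypic components $V_\alpha:=\mathbb{C}^n\otimes v_\alpha\cong\mathbb{C}^n$, and every adjacency matrix of the scheme preserves this decomposition.

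On $V_\alpha$ the thin generator $\lceil g\Imat\rfloor$ acts as the scalar $\overline{\alpha(g)}$, and $\lceil g\Rmat\rfloor$ acts as $\overline{\alpha(g)}\,\Rmat^{(\alpha)}$, where $\Rmat^{(\alpha)}:=\sum_h\overline{\alpha(h)}\Rmat_h$ is the image of $\Rmat$ under the entrywise $\mathbb{C}$-algebra homomorphism $\mathbb{C}[G]^{n\times n}\to\mathbb{C}^{n\times n}$ induced by $g\mapsto\overline{\alpha(g)}$. Applying this homomorphism to the identity in Lemma \ref{lem:RouxSimple} and using $c_g=c_{g^{-1}}$ to see that $\widehat c_\alpha=\sum_h c_h\overline{\alpha(h)}$ is real, one obtains a quadratic annihilator for $\Rmat^{(\alpha)}$ whose two roots are precisely the scalars $n\mu_\alpha^\pm$ in the statement. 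Consequently the primitive idempotents of the scheme are indexed by pairs $(\alpha,\epsilon)\in\widehat G\times\{+,-\}$, with $\Emat_{\alpha,\epsilon}$ supported on the $n\mu_\alpha^\epsilon$-eigenspace of $\Rmat^{(\alpha)}$ inside $V_\alpha$; this immediately yields $\Pmat$ in the displayed form.

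For the second eigenmatrix $\Qmat$, I would compute the multiplicities $m_{\alpha,\epsilon}=\dim\Emat_{\alpha,\epsilon}$ directly: on $V_\alpha$ the two eigenvalues of $\Rmat^{(\alpha)}$ have multiplicities summing to $n$ and satisfying $\mathrm{tr}\,\Rmat^{(\alpha)}=0$, a $2\times 2$ linear system whose solution I expect to simplify to the scalars $\lambda_\alpha^\pm$. The standard identity $k_j Q_{ij}=m_i\overline{P_{ij}}$, or equivalently $\Pmat\Qmat=rn\,\Imat$, then pins down the remaining entries of $\Qmat$; an alternative route is to invert $\Pmat$ directly by exploiting its Kronecker-type block structure (the character table of $G$ tensored against a $2\times 2$ block encoding the splitting of $V_\alpha$). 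The main obstacle lies here: although $\Pmat$ is essentially forced by the $G$-symmetry and the roux quadratic, identifying the multiplicities with the closed form $\lambda_\alpha^\epsilon=n/(2+\widehat c_\alpha\mu_\alpha^\epsilon)$ requires careful algebraic manipulation of the square roots implicit in $\mu_\alpha^\pm$ and some attention to the complex conjugation conventions when passing between $\Pmat$ and $\Qmat$.
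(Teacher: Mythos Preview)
Your approach is correct and genuinely different from the paper's. The paper does not carry out the $G$-isotypic decomposition at all: it simply quotes the primitive idempotents of the roux scheme from \cite[Theorem~2.8]{IversonMixon} (together with the normalising scalar buried in that proof), reads the columns of $\Qmat$ off from the coefficients of $\lceil g\Imat\rfloor$ and $\lceil g\Rmat\rfloor$ in those idempotents, and then recovers $\Pmat$ via the standard relation $\Pmat=\Delta_m^{-1}\overline{\Qmat^\top}\Delta_k$. So the paper obtains $\Qmat$ first by citation and $\Pmat$ second, while you obtain $\Pmat$ first from scratch and $\Qmat$ second. Your route is more self-contained---it never needs the explicit form of the idempotents from \cite{IversonMixon}, only Lemma~\ref{lem:RouxSimple}---whereas the paper's route is shorter precisely because it outsources that work.

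Your ``main obstacle'' is lighter than you fear. Once you have the two eigenvalues $n\mu_\alpha^\pm$ of $\Rmat^{(\alpha)}$ on $V_\alpha$, their product is $-1/n$ times $n^2$, i.e.\ $\mu_\alpha^+\mu_\alpha^-=-1/n$; combining this with the trace condition $d_+ (n\mu_\alpha^+)+d_-(n\mu_\alpha^-)=0$ and $d_++d_-=\dim V_\alpha$ gives $d_\epsilon=\dim V_\alpha/(1+n(\mu_\alpha^\epsilon)^2)$ in one line. The identity $1+n(\mu_\alpha^\epsilon)^2=2+\widehat c_\alpha\mu_\alpha^\epsilon$, which the paper also uses, is nothing more than the quadratic $\lambda^2=\widehat c_\alpha\lambda+n$ rewritten with $\lambda=n\mu_\alpha^\epsilon$, so no manipulation of square roots is needed.
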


\begin{proof}
    Up to a scalar multiple, the primitive idempotents of a roux scheme are provided in \cite[Theorem 2.8]{IversonMixon}, and the exact value of the scalar is found in the body of the proof, as is the equality $1 + (n-1) (\mu_\alpha^\epsilon)^2 = 2 + \widehat{c}_\alpha \mu_\alpha^\epsilon$. 
    Let $r$ be the order of $G$. Then the primitive idempotents are
\[
\frac{1}{r(1 + (n-1)(\mu_\alpha^\epsilon)^2)} \mathsf{G}_\alpha^\epsilon
\]
for
    \[
    \mathsf{G}_\alpha^\epsilon = \sum_{g \in G} \alpha(g)\lceil g \Imat \rfloor + \mu_\alpha^\epsilon \sum_{g \in G} \alpha(g) \lceil g\Rmat \rfloor.
    \]
    The coefficients in this expression give the columns of the second eigenmatrix, where the first sum corresponds to the thin relations and the second sum corresponds to the thick relations.
Observe that
\[
        \frac{rn}{r(1+(n-1)(\mu_\alpha^\epsilon)^2)}
        =\frac{n}{2 + \widehat{c}_\alpha \mu_\alpha^\epsilon},
\]
and setting this value to $\lambda_\alpha^\epsilon$ gives the entries in the second eigenmatrix.
The multiplicities (called $d_\alpha^\epsilon$ in \cite{IversonMixon}) are given by $m_\alpha^\epsilon = \frac{rn}{r(1 + (n-1)(\mu_\alpha^\epsilon)^2)} = \frac{n}{2 + \widehat{c}_\alpha \mu_\alpha^\epsilon}$. Observe that $\lambda_\alpha^\epsilon/ m_\alpha^\epsilon = \frac{n}{2+ \widehat{c}_\alpha \mu_\alpha^\epsilon}\frac{2 + \widehat{c}_\alpha \mu_\alpha^\epsilon}{n} = 1$. 
Denote the second eigenmatrix by $\Qmat$ and the first eigenmatrix by $\Pmat$, then $\Pmat = \Delta_m^{-1} \overline{\Qmat^\top} \Delta_k$, where $\Delta_k$ and $\Delta_m$ are matrices with the valencies and multiplicities on the diagonal, respectively. We know that the valencies of the thin relations are $1$ and the thick relations all have valency $n-1$, and so we obtain the first eigenmatrix.
\end{proof}

In response to the remark ``it seems that $d_\alpha^\epsilon \in \mathbb{Z}$ is a strong necessary condition for the existence
of roux" \cite[p22]{IversonMixon} we note that, since these values are the multiplicities of the association scheme, they must be positive integers.

If the association scheme is pseudocyclic (such as in Lemma \ref{lem:pseudocyclicamorphic}) the eigenmatrix structure is a natural consequence of constant valencies:

\begin{cor}
Let $\cY$ be a pseudocyclic association scheme, then $\cY \widehat\otimes G$ has eigenmatrix as given in Theorem \ref{thm:Pmat}, where 
\[
\mu_{1_G}^+ = 1, \mu_{1_G}^-=-\frac{1}{n}, 
\text{ and } \mu_\alpha^\epsilon= \frac{\epsilon}{\sqrt{n}}, 
\]
for ${1_G}$ the identity character and ${1_G} \neq \alpha \in \widehat{G}$.
\end{cor}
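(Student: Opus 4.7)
The plan is to apply Theorem \ref{thm:Pmat} directly, using the fact that pseudocyclicity collapses the character sum defining $\widehat{c}_\alpha$. By Theorem \ref{lem:1130a}, the roux parameters of $\cY \widehat\otimes G$ coincide with the valencies $c_g = k_g$ of $\cY$. Pseudocyclicity then forces a common value $k_g = k$ for every $g \in G$, so $\widehat{c}_\alpha = k \sum_{h \in G} \overline{\alpha(h)}$ for each $\alpha \in \widehat{G}$.

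This reduces the problem to two character sums. Writing $n = |\Omega|$ (the convention consistent with Theorem \ref{thm:Pmat}, since the discriminant $\widehat{c}_\alpha^2 + 4n$ comes from the quadratic $\Rmat^2 = (n-1)\Imat + \ldots$ with $n$ being the size of $\cY$), we have $rk = n-1$. For the trivial character $1_G$, the sum equals $r$, giving $\widehat{c}_{1_G} = rk = n-1$. The discriminant then telescopes, $(n-1)^2 + 4n = (n+1)^2$, and substitution yields $\mu_{1_G}^\pm = ((n-1) \pm (n+1))/(2n)$, which simplify to $1$ and $-1/n$ respectively. For any nontrivial $\alpha \in \widehat{G}$, column orthogonality of the character table gives $\sum_{h \in G} \overline{\alpha(h)} = 0$, whence $\widehat{c}_\alpha = 0$ and the formula collapses to $\mu_\alpha^\epsilon = \epsilon \sqrt{4n}/(2n) = \epsilon/\sqrt{n}$.

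No serious obstacle arises: the argument is a direct substitution once pseudocyclicity is used to trivialise the character sum via orthogonality. The only point requiring care is the bookkeeping of $n$, which here denotes the order of the underlying scheme $\cY$ rather than the total number of vertices of $\cY \widehat\otimes G$; this ensures that the identity $rk = n-1$ feeds correctly into the discriminant and produces the clean factorisation $(n+1)^2$.
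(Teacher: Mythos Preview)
Your proof is correct and follows essentially the same approach as the paper: use that the roux parameters equal the constant valency $k$, evaluate $\widehat{c}_\alpha$ via character orthogonality to get $n-1$ or $0$, and substitute into the formula for $\mu_\alpha^\epsilon$. One small slip in your parenthetical: with $n=|\Omega|$ the roux matrix has size $n+1$, so Lemma~\ref{lem:RouxSimple} gives $\Rmat^2=n\Imat+\cdots$ rather than $(n-1)\Imat+\cdots$; this is precisely what makes the discriminant $\widehat{c}_\alpha^2+4n$ and does not affect your computations.
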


\begin{proof}
Since $\cY$ is pseudocyclic, $k_g$ is constant for all $g \in G$.
So $\widehat{c}_{1_G} = n-1$ and $(n-1)^2 + 4n = (n+1)^2$. If  $\alpha \neq 1_G$ then $\widehat{c}_\alpha = 0$. The rest follows directly.
\end{proof}

We observe that the block structure of the eigenmatrix consists of the character table of $G$, with scaled rows. We are able to characterise roux schemes by their eigenmatrices as a result of Theorem \ref{thm:Pmat} together with Theorem \ref{thm:thinreg_basic} below.

\begin{thm}\label{thm:thinreg_basic}
Let $\cX$ be a commutative association scheme such that:
\begin{enumerate}
\item The eigenmatix of $\cX$ has the form
\begin{equation}\label{eq:PMatFormReg}
\Pmat=  \begin{bmatrix}
    \mathbb{T} & \mathsf{D}_+\mathbb{T} \\
    \mathbb{T} & \mathsf{D}_-\mathbb{T}
  \end{bmatrix},
\end{equation}
where $\mathsf{D}_+$ and $\mathsf{D}_-$ are some diagonal matrices and $\mathbb{T}$ is the character table of the thin radical of $\cX$,
\item  There is at least one entirely real column of $\Pmat$ corresponding to a thick relation.
\end{enumerate}
Then $\cX$ is a roux scheme.
\end{thm}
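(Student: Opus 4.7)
The plan is to verify the hypotheses of Lemma \ref{lem:RouxRadicalCharacterisation}: that the thin radical of $\cX$ acts regularly on the remaining adjacency matrices, and that at least one of those matrices is symmetric (commutativity of $\cX$ being given).

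First I would read off the thin radical from the left half of $\Pmat$. Writing $G$ for the thin radical, the trivial character makes the first row of $\mathbb{T}$ the all-ones vector, so the first $|G|$ columns of $\Pmat$ have valency $1$ and are the thin adjacency matrices, which I label $\Tmat_g$, $g \in G$. To label the thick columns by $G$ compatibly, I would use the factorisation
$$\Pmat = \begin{pmatrix} \Imat & \mathsf{D}_+ \\ \Imat & \mathsf{D}_- \end{pmatrix}\begin{pmatrix} \mathbb{T} & 0 \\ 0 & \mathbb{T} \end{pmatrix};$$
invertibility of $\Pmat$ and of $\mathbb{T}$ forces $(\mathsf{D}_+)_\alpha \neq (\mathsf{D}_-)_\alpha$ for every $\alpha \in \widehat{G}$, so the thick columns are pairwise distinct and may be indexed as $\Smat_h$, $h \in G$, in the column order of $\mathbb{T}$.

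With these labels in place, the regularity of the thin-radical action becomes a short eigenvalue computation. Commutativity of $\cX$ means every primitive idempotent $\Emat_\alpha^\epsilon$ is a common eigenvector of $\Tmat_g$ and $\Smat_h$, so
$$\Pmat(\Emat_\alpha^\epsilon,\, \Tmat_g \Smat_h) = \alpha(g)\,(\mathsf{D}_\epsilon)_\alpha\, \alpha(h) = (\mathsf{D}_\epsilon)_\alpha\, \alpha(gh) = \Pmat(\Emat_\alpha^\epsilon,\, \Smat_{gh}),$$
and since $\Pmat$ is injective on the Bose-Mesner algebra, $\Tmat_g \Smat_h = \Smat_{gh}$. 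Left multiplication by $\Tmat_g$ therefore permutes the $|G|$ thick relations via the left regular action of $G$ on itself.

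For the symmetry condition, hypothesis (ii) supplies a thick column of $\Pmat$ with all real entries, so the corresponding thick $\Smat_{h_0}$ has real spectrum; normality (forced by commutativity of the Bose-Mesner algebra) then makes it Hermitian, and since it is a $(0,1)$-matrix it is symmetric. Invoking Lemma \ref{lem:RouxRadicalCharacterisation} finishes the proof. The main obstacle I anticipate is not any single computation but the bookkeeping: one needs a consistent identification of both thin and thick columns of $\Pmat$ with elements of $G$ so that the statement $\Tmat_g \Smat_h = \Smat_{gh}$ even makes sense, and for that the invertibility argument together with the non-coincidence $(\mathsf{D}_+)_\alpha \neq (\mathsf{D}_-)_\alpha$ is essential.
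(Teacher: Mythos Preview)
Your proposal is correct and follows essentially the same approach as the paper's proof: label thin and thick relations by elements of $G$, use the multiplicativity of characters on the primitive idempotents to show $\Tmat_g\Smat_h=\Smat_{gh}$, deduce symmetry of a thick relation from the real column, and invoke Lemma~\ref{lem:RouxRadicalCharacterisation}. Your version is slightly more careful about bookkeeping (the invertibility argument forcing $(\mathsf{D}_+)_\alpha\neq(\mathsf{D}_-)_\alpha$, and the normality step for symmetry), but these are refinements of the same argument rather than a different route.
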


\begin{proof}
Let $G$ be the thin radical and its order be $r$.
For a
thin relation $g \in G$ we denote the corresponding adjacency matrix by $\Tmat_g$.
It is clear from the block structure of $\Pmat$ that there are $2r$ relations in total.
Hence we may also label the thick relations by elements of $G$ and denote their adjacency matrices by $\Smat_g$.
Since there are $2r$ relations there are also $2r$ primitive idempotents of the Bose-Mesner algebra, and since $\mathbb{T}$ repeats twice we can connect two primitive idempotents to each character in $\widehat{G}$. Thus, we shall denote the primitive idempotents by $\Emat_\alpha^\epsilon$ for $\alpha \in \widehat{G}$ and $\epsilon \in \{+,-\}$.
The relations can be expressed as linear combinations of the primitive idempotents by the columns of $\Pmat$ like so:
\[
\Tmat_g = \sum_{\alpha \in \widehat{G}} (\mathbb{T}_{\alpha, g} \Emat_\alpha^+ + \mathbb{T}_{\alpha, g} \Emat_{\alpha}^-),\quad 
\Smat_g = \sum_{\alpha \in \widehat{G}} (d_{\alpha}^+\mathbb{T}_{\alpha, g} \Emat_\alpha^+ + d_{\alpha}^-\mathbb{T}_{\alpha, g} \Emat_{\alpha}^-),
\]
where $d_\alpha^\epsilon$ is the diagonal entry of $\mathsf{D}_\epsilon$ on the $\alpha$ row.
Since $\Emat_\alpha^\epsilon$ are idempotent for all $\alpha \in \widehat{G}$ and $\epsilon \in \{+,-\}$ and $\alpha(h)\alpha(g) = \alpha(hg)$ it follows that for all $h, g \in G$, 
\[
\Tmat_h \Smat_g  = \sum_{\alpha \in \widehat{G}} (d_\alpha^+ \mathbb{T}_{\alpha, h} \mathbb{T}_{\alpha, g} \Emat_\alpha^+ + d_\alpha^- \mathbb{T}_{\alpha, h}  \mathbb{T}_{\alpha, g }\Emat_{\alpha}^-)
= \sum_{\alpha \in \widehat{G}} (d_\alpha^+ \mathbb{T}_{\alpha, hg} \Emat_\alpha^+ + d_\alpha^- \mathbb{T}_{\alpha, hg} \Emat_\alpha^-)
= \Smat_{hg}.
\]
(Moreover since $\cX$ is commutative the order of multiplication does not matter). Thus the thin relations act regularly on the thick relations, and since a column of the eigenmatrix is real if and only if the corresponding relation is symmetric, $\cX$ possesses a thick symmetric relation. 
It follows from Lemma \ref{lem:RouxRadicalCharacterisation} that $\cX$ is a roux scheme.
\end{proof}

Theorem \ref{thm:thinreg_basic} is useful for identifying roux schemes from their parameters alone, which we will do in Section \ref{sec:Uniqueness}.

\section{Decomposing roux schemes}\label{sec:Decomp}

Recall that roux schemes are characterised (Lemma \ref{lem:RouxRadicalCharacterisation}) as association schemes that possess a thick symmetric relation and whose thin radicals act regularly on the thick relations.
We consider the slightly more general case (with no symmetric relation) in the theorem below to show how an association scheme may be decomposed into blocks by the action of the thin radical. Later we will consider how roux schemes in particular can be decomposed when they carry an association scheme locally.

\begin{thm}\label{thm:decomp}
Let $\cX$ be a commutative association scheme such that the thin radical $G$ acts regularly on the thick relations. Then $\cX$ is uniquely recoverable from $G$ and the neighourhood stucture of any thick relation about any vertex.
(That is, the restriction of the relations of $\cX$ to this neighbourhood, together with the natural permutation representation of $G$, uniquely determine all relations of $\cX$.)
\end{thm}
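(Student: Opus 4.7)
The plan is to parametrise $X$ as $G \times T$ for a canonical transversal $T$ of the $G$-orbits on $X$, and then to read off every relation of $\cX$ from the local scheme via a single shift formula; this realises $\cX$ essentially as the construction $\cY \widehat{\otimes} G$ of Theorem~\ref{lem:1130a} with $\cY$ the local scheme. First I would fix one thick relation $\Srel_1$ and label the remaining thick relations by $\Srel_g := \Tmat_g \Srel_1$, so that $\Tmat_g \Smat_h = \Smat_{gh}$ for all $g, h \in G$; commutativity of $\cX$ forces $G$ to be abelian. Writing $g \cdot x$ for the unique vertex with $(x, g \cdot x) \in \Trel_g$, expanding $\Tmat_g \Smat_s = \Smat_{gs}$ and $\Smat_s \Tmat_g = \Smat_{sg}$ entrywise yields
\[
(g \cdot x, y) \in \Srel_s \iff (x, y) \in \Srel_{gs}, \qquad (x, g \cdot y) \in \Srel_s \iff (x, y) \in \Srel_{sg^{-1}},
\]
with entirely analogous identities for the thin relations. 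Combining them gives the unified shift formula
\[
(g_1 \cdot t_1, g_2 \cdot t_2) \in \Rrel_s \iff (t_1, t_2) \in \Rrel_{g_1 s g_2^{-1}}
\]
for every relation $\Rrel_s$ of $\cX$ indexed by $s \in G$.

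Next I would fix a vertex $z$ and set $T := \{z\} \cup \Srel_1(z)$. The $G$-action on $X$ is free since $\Trel_g$ is off-diagonal for $g \neq 1$, so each $G$-orbit has size $r := |G|$. The right shift formula applied with $y \in \Srel_1(z)$ gives $g \cdot y \in \Srel_g(z)$, which is disjoint from $\Srel_1(z) \cup \{z\}$ whenever $g \neq 1$, so distinct elements of $T$ lie in distinct $G$-orbits. Conversely, any $w \in X$ with $(z, w) \in \Srel_g$ satisfies $g^{-1} \cdot w \in \Srel_1(z)$ by the same formula, while $(z, w) \in \Trel_g$ gives $w = g \cdot z$. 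Hence $T$ is a transversal, and every vertex of $X$ has a unique expression $a = g \cdot t$ with $g \in G$ and $t \in T$.

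To conclude, every relation of $\cX$ restricted to $T \times T$ is furnished by the given data: the diagonal is the identity, pairs $(z, y)$ and $(y, z)$ with $y \in \Srel_1(z)$ lie in $\Srel_1$ and its transpose, and pairs in $\Srel_1(z) \times \Srel_1(z)$ are covered by the local scheme with the $G$-labels they inherit from $\cX$. The regular permutation representation of $G$ realises the $G$-action on each of the $|T|$ orbit copies of $G$ comprising $X$, and the unified shift formula then prescribes the relation containing every pair $(g_1 \cdot t_1, g_2 \cdot t_2) \in X \times X$ in terms of the relation containing $(t_1, t_2)$. Thus $\cX$ is uniquely recovered.

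The main obstacle is conceptual rather than computational: one has to recognise that the labelling of the thick relations by $G$, which is intrinsic to the regular action of $G$ on them, is exactly the extra piece of data that allows the shift formula to turn a partition of $\Srel_1(z) \times \Srel_1(z)$ into a complete description of $\cX$.
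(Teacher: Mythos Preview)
Your proof is correct and follows essentially the same route as the paper's. Both fix $z$, take $T=\{z\}\cup\Srel_1(z)$ as a transversal for the free $G$-action on $\Omega$, and then reconstruct each relation from its restriction to $T\times T$. The only cosmetic difference is that the paper packages the reconstruction as explicit tensor-product identities $\Cmat_g=\Imat\otimes\Perm_g$ and $\Bmat_g=\sum_{\ell}\Mmat_\ell\otimes\Perm_{g\ell}$ (which feed directly into Theorem~\ref{thm:Rouxdecomp}), whereas you phrase the same content as the entrywise shift formula $(g_1\cdot t_1,g_2\cdot t_2)\in\Rrel_s\iff(t_1,t_2)\in\Rrel_{g_1 s g_2^{-1}}$; unwinding either one yields the other.
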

\begin{proof}
We may label the thick relations by the elements of $G$. Let $\Srel_1$ be the thick relation mentioned in the theorem. Then the labeling of the remaining thick relations $S_g$ is determined by the regular action of $G$ such that $\Smat_g = \Smat_1\Tmat_g$ (where, as in the proof of Theorem \ref{thm:thinreg_basic}, we denote by $\Tmat_g$ the adjacency matrix of the thin relation $g \in G$ and by $\Smat_g$ the adjacency matrix of the thick relation indexed by $g$).

Let $\Omega$ be the vertex set of $\cX$ and define the right action of $G$ on $\Omega$ 
by $\{x^g\} = \Trel_g(x)$ for all $x \in \Omega$ and all $g \in G$. Equivalently, $y = x^g$ if and only if $(\Cmat_g)_{xy}=1$, and moreover since the relation $T_g$ is thin, it follows that $(\Cmat_g)_{x,y} = \delta_{x^g, y}$ for $x, y \in \Omega$. This is indeed an action, since
\begin{align*}
 (\Cmat_{gh})_{xz} =  (\Cmat_g \Cmat_h)_{xz} = (\Cmat_g)_{x x^g} (\Cmat_h)_{x^gz} = 1
  \iff   (\Cmat_h)_{x^gz}=1,
\end{align*}
hence $x^{gh} = (x^g)^h$.
Moreover, this action is clearly faithful and semi-regular.

Fix $z \in \Omega$ and set 
$\widetilde{\Omega}_1 := \{z\} \cup \Srel_1(z)$. 
Now
\[
\Srel_1(z)^g = \{y^g \mid y \in \Srel_1(z)\} = \bigcup_{y \in \Srel_1(z)} \Trel_g(y) = \Srel_g(z)
\]
and so
\[
\widetilde{\Omega}_g := \widetilde{\Omega}_1^g = \{z^g\} \cup \Srel_1(z)^g = \Trel_g(z) \cup \Srel_g(z).
\]
Moreover, $(\widetilde{\Omega}_g)^h = \widetilde{\Omega}_{gh}$ and
\[
\Omega = \bigcup_{g \in G} \left(\Trel_g(z) \cup \Srel_g(z) \right) = \bigcup_{g\in G} \widetilde{\Omega}_g.
\]

To determine the structure of $\Cmat_g$ and $\Bmat_g$ we will use the bijection $\widetilde{\Omega}_1 \times G \to \Omega$ given by $(x, g) \mapsto x^g$.
Firstly, we claim that $\Cmat_g = \Imat \otimes \Perm_g \in \{0,1\}^{\Omega \times \Omega}$.
Indeed, for $x, y \in \widetilde{\Omega}_1$, $h, k \in G$,
{\allowdisplaybreaks
\begin{align*}
(\Imat \otimes \Perm_g)_{(x,h), (y, k)} 
 &= \delta_{x,y}\delta_{hg,k}\\
 &= \delta_{(x,hg), (y,k)}\\
 &= \delta_{x^{hg}, y^k}\\
&= (\Cmat_g)_{x^h, y^k},
\end{align*}}
proving the claim.

Next, given $\ell\in G$, we define $\Mmat_\ell \in \{0,1\}^{\widetilde{\Omega}_1 \times \widetilde{\Omega}_1}$ 
as the adjacency matrix of the restriction of $S_{\ell^{-1}}$ to $\widetilde{\Omega}_1$, 
$\Mmat_\ell = \Bmat_{\ell^{-1}}|_{\widetilde{\Omega}_1 \times \widetilde{\Omega}_1}$. 
Note that $\{\Mmat_\ell\}_{\ell \in G}$ are precisely the matrices describing the relations
between vertices in the 
$S_1$-neighbourhood about $z$. 

We claim that $\Bmat_g = \sum_{\ell} \Mmat_\ell \otimes \Perm_{g\ell}$. 
Indeed, observing that $\Smat_{k^{-1}hg} = \Smat_1 \Tmat_{k^{-1}hg} = \Smat_1 \Tmat_{k^{-1}} \Tmat_h \Tmat_g = \Tmat_h \Smat_1 \Tmat_g \Tmat_{k^{-1}} = \Tmat_h \Smat_g \Tmat_{k^{-1}}$ since $\cX$ is commutative,
 for all $x, y \in \widetilde{\Omega}_1$ and all $h, k \in G$, we have
{\allowdisplaybreaks
\begin{align*}
\left(\sum_{\ell \in G} \Mmat_\ell \otimes \Perm_{g\ell}\right)_{(x,h), (y, k)} &= \sum_{\ell \in G} (\Mmat_\ell)_{x,y} (\Perm_{g\ell})_{h,k}\\
 &= \sum_{\ell \in G}(\Bmat_{\ell^{-1}})_{x,y}\delta_{hg\ell,k}\\
 &= (\Bmat_{k^{-1}hg})_{x,y}\\
 &=(\Cmat_h\Bmat_g\Cmat_{k^{-1}})_{x,y}\\
 &=\sum_{u,v}  (\Cmat_h)_{x,u} (\Bmat_g)_{u,v} (\Cmat_{k^{-1}})_{v,y}\\
 &= \sum_{u,v} \delta_{x^h, u} (\Bmat_g)_{u,v} \delta_{v^{k^{-1}},y}\\
 &= (\Bmat_g)_{x^h, y^k}
\end{align*}}
proving the claim. 

Now, the structure of the adjacency matrices
$\Cmat_g = \Imat \otimes \Perm_g$ and $\Bmat_g = \sum_{\ell} \Mmat_\ell \otimes \Perm_{g\ell}$ makes it clear how $\cX$ may be recovered from $G$ and $\{\Mmat_\ell\}_{ \ell \in G}$.
\end{proof}

\begin{lem}\label{lem:roux}
If $(\Omega,\{T_g\mid g\in G\}\cup\{S_g\mid g\in G\})$ 
is a roux scheme,
then there exists a labeling of thick relations in such a way that
\[
\Tmat_g\Smat_h=\Smat_{gh} \quad(g,h\in G), \quad \text{ and } \quad
S_g^\top=S_{g^{-1}}\quad(g\in G)
\]
hold.
\end{lem}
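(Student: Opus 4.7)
The plan is to invoke Lemma \ref{lem:RouxRadicalCharacterisation}, which supplies exactly the structural ingredients we need: the thin radical $G$ acts regularly by left multiplication on the set of thick adjacency matrices, and at least one thick adjacency matrix, say $\Smat_\ast$, is symmetric. The whole lemma is then a matter of choosing the right labeling of the thick relations so that multiplication by $\Tmat_g$ corresponds to multiplication by $g$ on the index.

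First, I would fix such a symmetric thick adjacency matrix $\Smat_\ast$ and declare $\Smat_1:=\Smat_\ast$, where $1\in G$ is the identity. Since $G$ acts regularly on the thick adjacency matrices by (left) multiplication, the assignment $g\mapsto \Tmat_g\Smat_1$ is a bijection from $G$ onto the set of thick adjacency matrices. I would use this bijection to re-label the thick relations, defining $\Smat_g:=\Tmat_g\Smat_1$ for every $g\in G$. (The labeling of the thin relations by $G$ is already assumed to come from an isomorphism of the thin radical with $G$, so $\Tmat_g\Tmat_h=\Tmat_{gh}$ holds by construction.)

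Next I would verify the multiplicative property. For $g,h\in G$, associativity and the group law of the thin radical give
\[
\Tmat_g\Smat_h=\Tmat_g(\Tmat_h\Smat_1)=(\Tmat_g\Tmat_h)\Smat_1=\Tmat_{gh}\Smat_1=\Smat_{gh},
\]
which is the first asserted identity. Finally, for the transposition identity, I would use three facts: $\Smat_1^\top=\Smat_1$ (by the symmetry of the chosen base relation), $\Tmat_g^\top=\Tmat_g^{-1}=\Tmat_{g^{-1}}$ (since $\Tmat_g$ is a permutation matrix and the $\Tmat_g$ form a group), and the commutativity of the Bose--Mesner algebra. Then
\[
\Smat_g^\top=(\Tmat_g\Smat_1)^\top=\Smat_1^\top\Tmat_g^\top=\Smat_1\Tmat_{g^{-1}}=\Tmat_{g^{-1}}\Smat_1=\Smat_{g^{-1}}.
\]

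There is no real obstacle here: the content of the lemma is entirely bookkeeping once Lemma \ref{lem:RouxRadicalCharacterisation} is available, and the only choice with any force is that of the base symmetric thick relation, whose existence is exactly what Lemma \ref{lem:RouxRadicalCharacterisation} guarantees.
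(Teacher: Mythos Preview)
Your proof is correct and follows essentially the same approach as the paper: choose a symmetric thick relation as $\Smat_1$, label the others via the regular action $\Smat_g:=\Tmat_g\Smat_1$, and then verify the transpose identity using $\Smat_1^\top=\Smat_1$ and $\Tmat_g^\top=\Tmat_{g^{-1}}$. Your write-up is in fact slightly more detailed than the paper's, since you explicitly check $\Tmat_g\Smat_h=\Smat_{gh}$ and spell out the use of commutativity.
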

\begin{proof}
Since the scheme is roux, there is a thick 
symmetric
relation, which we label $S_1$.  
Since $\{\Tmat_g\}$ act regularly on the thick relations, this determines the remaining labeling of thick relations such that $\Smat_g = \Tmat_g \Smat_1$.
Then
$\Smat_g^\top = (\Tmat_g \Smat_1)^\top = \Tmat_{g^{-1}}\Smat_1 = \Smat_{g^{-1}}$, where the second equality comes from $S_1^\top = S_1$.
\end{proof}

\begin{thm}\label{thm:Rouxdecomp}
Let $\cX$ be a roux scheme with thin radical $G$. If the neighbourhood of some vertex with respect to some thick relation induces a $|G|$-class local association scheme, $\cY$, then $\cX = \cY \widehat\otimes G$.
\end{thm}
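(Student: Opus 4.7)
The plan is to unify the decomposition Theorem \ref{thm:decomp} with the construction of Theorem \ref{lem:1130a}: once the relations of $\cX$ and of $\cY$ are both indexed by $G$ in a compatible way, the matrices $\Mmat_\ell$ produced by the former turn out to be exactly the matrices $\tA_\ell$ built by the latter from $\cY$ and $G$. First I would apply Lemma \ref{lem:roux} to fix a labeling $\{\Smat_g:g\in G\}$ of the thick relations so that $\Tmat_g\Smat_h=\Smat_{gh}$ and $\Smat_g^\top=\Smat_{g^{-1}}$, and in particular $\Smat_1=\Smat_1^\top$. The hypothesis supplies the local scheme at some $\Srel_{g_0}$-neighbourhood of some vertex $z_0$, but the identity $\Tmat_{g_0}\Smat_1=\Smat_{g_0}$ implies $\Srel_{g_0}(z_0)=\Srel_1(z_0^{g_0})$ as subsets of $\Omega$; so, setting $z:=z_0^{g_0}$, I may assume without loss of generality that $\cY$ is the local scheme on $\Srel_1(z)$.

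I then invoke Theorem \ref{thm:decomp} at $z$ with the thick relation $\Srel_1$ to write
\[
\Cmat_g=\Imat\otimes\Perm_g, \qquad \Bmat_g=\sum_{\ell\in G}\Mmat_\ell\otimes\Perm_{g\ell},
\]
with $\Mmat_\ell=\Bmat_{\ell^{-1}}\big|_{\widetilde\Omega_1\times\widetilde\Omega_1}$ and $\widetilde\Omega_1=\{z\}\cup\Srel_1(z)$. The crux is to label the non-identity classes of $\cY$ by $G$ via
\[
\Amat_g\;:=\;\Smat_{g^{-1}}\big|_{\Srel_1(z)\times\Srel_1(z)} \qquad(g\in G)
\]
and to check $\Mmat_g=\tA_g$ for every $g\in G$. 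This is a direct block calculation in the partition $\widetilde\Omega_1=\{z\}\sqcup\Srel_1(z)$: the border row and column of $\Mmat_1$ indexed by $z$ are all-ones vectors (by the definition of $\Srel_1(z)$ together with $\Smat_1^\top=\Smat_1$), while for $g\neq 1$ those borders vanish, since $(z,y)\in\Srel_1$ is incompatible with $(z,y)\in\Srel_{g^{-1}}$ once relations partition $\Omega\times\Omega$; in both cases the inner $\Srel_1(z)\times\Srel_1(z)$ block is $\Amat_g$ by construction. Feeding $\Mmat_g=\tA_g$ into the displayed formulas reproduces exactly \eqref{thickrels} and \eqref{thinrels}, hence $\cX=\cY\widehat\otimes G$.

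The only genuine obstacle is the bookkeeping of the various labelings: ensuring that the inverse appearing in $\Amat_g=\Smat_{g^{-1}}|_{\Srel_1(z)}$ is consistent with the convention $\Mmat_\ell=\Bmat_{\ell^{-1}}|_{\widetilde\Omega_1}$ from Theorem \ref{thm:decomp}, and that the prescription $g\mapsto\Amat_g$ is indeed a bijection between $G$ and the non-identity classes of $\cY$. Bijectivity is where the class-count hypothesis on $\cY$ enters: any non-identity thin relation $\Trel_g$ restricts trivially to $\Srel_1(z)$, because $\Smat_1\Tmat_g=\Smat_g\neq\Smat_1$ forces $p^{\Srel_1}_{\Srel_1,\Trel_g}=0$, so the $|G|$ non-identity classes of $\cY$ must exhaust the $|G|$ nontrivial restrictions of thick relations, and in particular each $\Smat_{g^{-1}}|_{\Srel_1(z)}$ is nonempty. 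The intersection number condition of Theorem \ref{lem:1130a} need not be verified directly either: it follows automatically, once $\Mmat_g=\tA_g$ is established, from the fact that the matrices $\{\Cmat_g,\Bmat_g\}_{g\in G}$ already describe the genuine association scheme $\cX$.
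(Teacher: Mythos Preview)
Your argument is correct and follows essentially the same route as the paper: apply Lemma~\ref{lem:roux} to normalise the labeling so that $\Smat_1$ is symmetric, translate the given local scheme to the $\Srel_1$-neighbourhood of a suitable vertex, invoke Theorem~\ref{thm:decomp}, and identify the blocks $\Mmat_\ell$ with the $\tA_\ell$ of~\eqref{1129a}. You are in fact more careful than the paper on two points it leaves implicit: the explicit verification of the border row and column of $\Mmat_g$, and the use of the $|G|$-class hypothesis (via $p^{\Srel_1}_{\Srel_1,\Trel_g}=0$ for $g\neq 1$) to ensure that the map $g\mapsto\Amat_g$ is a bijection onto the non-identity classes of $\cY$.
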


\begin{proof}
By Lemma \ref{lem:roux}, we may assign an ordering to the thick relations by the elements of $G$ such that $S_g^\top = S_{g^{-1}}$. In particular, we have $S_1$ is symmetric.

Let $\Omega$ be the vertex set of $\cX$. Recall the right action of $G$ on $\Omega$ defined 
in the proof of Theorem \ref{thm:decomp}.
Note that for all $x \in \Omega$,  $S_g(x^h) = S_{gh}(x)$ for all $h, g \in G$ and $T_g \cap (S_k(x) \times S_k(x))= \varnothing$ for all $k,g \in G$ except for $g=1$.
By assumption, there is some vertex $x$ and some thick relation $S_m$, $m \in G$, which induce the local scheme $\cY$ with vertex set $\Omega_1 = S_m(x)$, and hence $\{S_{\ell}|_{\Omega_1\times \Omega_1}\colon \ell \in G\} \cup \{T_{1}|_{\Omega_1 \times \Omega_1}\}$ are the relations of $\cY$. 
 
Let $z = x^m$, then $\Omega_1 = 
S_m(x) = S_1(x^m) = S_1(z)$ 
and so restricting to the $S_1$ neighbourhood about $z$ also yields $\cY$. From Theorem \ref{thm:decomp} (and its proof) we know that $\Cmat_g = \Imat \otimes \Perm_g$ and $\Bmat_g = \sum_{\ell} \tA_\ell \otimes \Perm_{g\ell}$ for $\tA_\ell = \Bmat_{\ell^{-1}}|_{\widetilde{\Omega}_1 \times \widetilde{\Omega}_1}$, and note that $\tA_\ell$ has the form \eqref{1129a} 
for $\Amat_\ell = \Bmat_{\ell^{-1}}|_{\Omega_1 \times \Omega_1}$. Finally $S_g^\top = S_{g^{-1}}$ implies that $\Amat_g^\top = \Amat_{g^{-1}}$ for all $g \in G$, and since the relations of $\cX$ are of the form \eqref{thickrels} and \eqref{thinrels}, it follows that $\cX = \cY \widehat\otimes G$.
\end{proof}

\begin{remark}
It is natural to consider generalisations such as when $\cX$ in Theorem \ref{thm:Rouxdecomp} is not roux (has no thick symmetric relations) or the number of classes of $\cY$ is less than $|G|$. However since this does not fit the theme of this paper, these generalisations will be treated separately in a forthcoming work by the second and third author.
\end{remark}

\section{The uniqueness of Hoggar's scheme}\label{sec:Uniqueness}

Recall that $\mathcal{H}$ denotes the association scheme on $256$ vertices arising from Hoggar's 
$64$ equiangular lines in $\mathbb{C}^8$, as described in Section \ref{sec:intro}, and that $\mathcal{H}$ has the eigenmatrix given in \eqref{eq:2}.
The main result of this section is that $\mathcal{H}$ is characterised by its eigenmatrix:
\begin{thm}\label{thm:1}
An association scheme with the eigenmatrix \eqref{eq:2} is isomorphic to $\mathcal{H}$.
\end{thm}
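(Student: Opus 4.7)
The plan is three-staged: first show that any association scheme $\cX$ with eigenmatrix \eqref{eq:2} must be a roux scheme, then decompose it via a local association scheme on a thick-relation neighbourhood, and finally pin down the local scheme using a uniqueness statement.

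The first stage follows directly from Theorem \ref{thm:thinreg_basic}. The top-left and bottom-left $4\times 4$ blocks of \eqref{eq:2} both coincide with the character table $\mathbb{T}$ of $\mathbb{Z}_4$, while the right-hand blocks factor as $\mathsf{D}_+\mathbb{T}$ and $\mathsf{D}_-\mathbb{T}$ with $\mathsf{D}_+=\diag(63,21,-9,21)$ and $\mathsf{D}_-=\diag(-1,-3,7,-3)$. Columns $4$ and $6$ are real. Thus the hypotheses of Theorem \ref{thm:thinreg_basic} are satisfied, and $\cX$ is a roux scheme whose thin radical is $\mathbb{Z}_4$, with four thick relations of valency $63$.

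The second stage is the heart of the argument. I would show that for any vertex $z$, the restrictions of the relations of $\cX$ to the thick neighbourhood $R_4(z)$ form a $4$-class association scheme $\cY'$. To this end I would first compute the Krein parameters of $\cX$ from \eqref{eq:2} and identify those that vanish. Each vanishing $q^k_{ij}$ translates, via Proposition \ref{prop:qijk=0}, into a quadratic identity on the triple intersection numbers $\sV{u & v & w}{r & s & t}$. Combined with the row sums \eqref{eqn:triple}--\eqref{eqn:triple2}, these identities should force the count $|R_i(y_1)\cap R_j(y_2)\cap R_4(z)|$ to depend only on the relation joining $y_1$ and $y_2$ for all $y_1,y_2\in R_4(z)$, which is precisely the constancy needed for the local structure to form an association scheme. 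The number of non-trivial local relations is exactly four: no non-identity thin relation can restrict non-trivially to $R_4(z)$ (since the thin radical permutes thick neighbourhoods among themselves), and a check of the intersection numbers $p^{S_g}_{S_1 S_1}$ computed directly from \eqref{eq:2} shows all four thick relations of $\cX$ contribute.

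Once $\cY'$ is in hand, Theorem \ref{thm:Rouxdecomp} yields $\cX\cong\cY'\widehat\otimes\mathbb{Z}_4$. Reading off the parameters of $\cY'$ from the intersection numbers of $\cX$ shows that $\cY'$ has the eigenmatrix \eqref{eq:HexFissionP}, so applying Theorem \ref{thm:2} (the uniqueness statement for $\cY$, proved separately) gives $\cY'\cong\cY$, and hence $\cX\cong\cY\widehat\otimes\mathbb{Z}_4=\mathcal{H}$. The main obstacle is the second stage: combining the quadratic constraints from vanishing Krein parameters with the linear row-sum relations to force constancy of enough triple intersection numbers. In practice this may require a computer-assisted analysis tailored to the specific eigenmatrix \eqref{eq:2}, especially to distinguish the desired local structure from spurious alternatives suggested by the remark following Example \ref{ex:hex}.
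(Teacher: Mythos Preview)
Your approach is essentially the same as the paper's. The paper packages your second stage as a separate Lemma~\ref{lem:1}, which asserts the stronger conclusion that $\cX$ is fully triply regular: the equations from Proposition~\ref{prop:qijk=0} (one for each vanishing Krein parameter) together with \eqref{eqn:triple} form a \emph{linear} system in the $7^3$ unknowns $[r\ s\ t]$ that turns out to have full rank for every choice of $U,V,W$, so all triple intersection numbers are determined. (Your description of these as ``quadratic identities'' is a slip; the $[r\ s\ t]$ appear linearly, with the $\Qmat$-entries as known coefficients.) The paper then cites Theorem~\ref{thm:decomp} rather than Theorem~\ref{thm:Rouxdecomp} for the final step, but your citation of Theorem~\ref{thm:Rouxdecomp} is arguably the cleaner choice.
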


We give the proof of Theorem~\ref{thm:1} now, but note that it requires Lemma~\ref{lem:1} and Theorem~\ref{thm:2}, which are provided subsequently and are of independent interest.
The idea of the proof of Theorem~
\ref{thm:2} is similar to 
\cite{BBB2004,GS2024}.

\begin{proof}[Proof of 
Theorem~\ref{thm:1}]
The eigenmatrix in \eqref{eq:2} satisfies the conditions of Theorem \ref{thm:thinreg_basic} where 
$\mathbb{T}$ is the character table of $\mathbb{Z}_4$, $D_+ = \text{diag}(63, 21, -9, 21)$ and $D_- = \text{diag}(-1, -3, 7, -3)$, so the thin relations of $\cX$ act regularly on the thick relations. By Lemma \ref{lem:1} $\cX$ is triply regular, and hence for any vertex $x$ the neighbourhood $R_4(x)$ 
induces an association scheme $\cY$
with eigenmatrix \eqref{eq:HexFissionP}.
By Theorem \ref{thm:decomp} it follows that $\cX = \cY \widehat\otimes Z_4$. Since $\cY$ is unique by Theorem~\ref{thm:2}, it follows that $\cX$ is unique.
\end{proof}

\begin{lem}\label{lem:1}   
An association scheme with the eigenmatrix in \eqref{eq:2} is triply regular. Moreover, any local scheme (with respect to a thick relation) has eigenmatrix \eqref{eq:HexFissionP}. 
\end{lem}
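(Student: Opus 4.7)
The approach is to use Proposition~\ref{prop:qijk=0} to translate the vanishing Krein parameters of $\cX$ into linear relations among triple intersection numbers, solve the resulting linear system to force triple regularity, and then read off the parameters of the local scheme from the solution.

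First I would compute the second eigenmatrix $\Qmat = n\Pmat^{-1}$ from \eqref{eq:2} (with $n=256$), the multiplicities $m_\ell$, and all Krein parameters $q_{ij}^k$ via the standard formula $q_{ij}^k = \tfrac{m_k}{n} \sum_\ell \Qmat_{\ell i}\Qmat_{\ell j}\overline{\Qmat_{\ell k}}/k_\ell$. Because $\Pmat$ has the block form of Theorem~\ref{thm:thinreg_basic} with $\mathbb{T}$ the character table of $\mathbb{Z}_4$ and diagonal scalings $\mathsf{D}_+ = \diag(63,21,-9,21)$, $\mathsf{D}_- = \diag(-1,-3,7,-3)$, the Krein algebra is naturally graded by $\widehat{\mathbb{Z}_4}$, and I expect many systematic vanishings of $q_{ij}^k$. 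For each vanishing $q_{ij}^k$, Proposition~\ref{prop:qijk=0} supplies the identity $\sum_{r,s,t} \Qmat_{ri}\Qmat_{sj}\overline{\Qmat_{tk}}\sV{u & v & w}{r & s & t}=0$ for every triple $u,v,w$.

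Next, fixing pairwise relations $(U,V,W)$ among $u,v,w$, I combine these Krein-derived equations with the marginals \eqref{eqn:triple} and the boundary values \eqref{eqn:triple2}. These give a linear system in the triple intersection numbers $\sV{u & v & w}{r & s & t}$, and I would verify that it admits a unique solution depending only on $(U,V,W)$ and $(r,s,t)$. By definition this establishes triple regularity of $\cX$. For the second half of the lemma, fix any vertex $x$ and $y,z \in R_4(x)$ with $(y,z) \in R_j$; the local count $|\{w \in R_4(x) : (y,w)\in R_{j_1},\ (z,w)\in R_{j_2}\}|$ is exactly $\sV{x & y & z}{4 & j_1 & j_2}$, which now depends only on $j, j_1, j_2$. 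This both shows that the restrictions $R_j \cap (R_4(x)\times R_4(x))$ form an association scheme $\cY$ on $63$ vertices and furnishes its intersection numbers. Diagonalising the Bose-Mesner algebra of $\cY$ and matching against \eqref{eq:HexFissionP} closes the lemma; since thick relations are related by the regular action of the thin radical, the same eigenmatrix arises for the neighbourhood of any other thick relation as well.

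The principal obstacle is the middle step: verifying that sufficiently many independent equations arise from vanishing Krein parameters to uniquely determine each triple intersection number. With $8$ classes, up to $8^3=512$ unknowns per pairwise configuration, and $8^3$ potential Krein constraints, direct hand computation is infeasible; I would carry this out with computer algebra, exploiting the $\mathbb{Z}_4$-grading and the product structure $\cX \cong \cY \widehat\otimes \mathbb{Z}_4$ implied by Theorem~\ref{thm:thinreg_basic} to reduce the number of genuinely independent cases. Once the system is shown to have full rank on the subspace orthogonal to the trivial solutions, the remainder of the argument is bookkeeping.
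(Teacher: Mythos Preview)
Your proposal is correct and follows essentially the same route as the paper: use Proposition~\ref{prop:qijk=0} on the vanishing Krein parameters together with \eqref{eqn:triple} and \eqref{eqn:triple2} to obtain a linear system in the $7^3$ nontrivial triple intersection numbers, verify (by computer) that it has full rank for every choice of $(U,V,W)$, and then read off the local intersection numbers and hence the eigenmatrix \eqref{eq:HexFissionP}. The paper adds one small simplifying observation you may find useful: the coefficient matrix of the linear system is the \emph{same} for every $(U,V,W)$---only the right-hand side (coming from the marginals \eqref{eqn:triple}) changes---so a single rank computation suffices rather than one per configuration.
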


\begin{proof}
In view of \eqref{eqn:triple2}, we are only interested in the 
triple intersection numbers $[i\ j\ k]$ with $i,j,k\in \{1,\ldots,7\}$.
Then, given a triple of pairwise distinct points $u,v,w\in \Omega$ 
(more precisely, the relations $U,V,W$ containing the corresponding points), 
Proposition \ref{prop:qijk=0} together with \eqref{eqn:triple} 
yields a linear system of equations in $7^3$ unknown 
triple intersection numbers with respect to $u,v,w$. 
Independent of the choice of relations $U, V, W$,
we obtain by computer a linear system of full rank with the same coefficient matrix. Thus there is a unique solution.
This shows that $\mathcal{X}$ is triply regular. 
Moreover, this means that the triple intersection numbers are the same as those of $\mathcal{H}$. After computing the eigenmatrix of the local scheme from its intersection numbers, we obtain \eqref{eq:HexFissionP}.
\end{proof}

If an association scheme with eigenmatrix \eqref{eq:HexFissionP} has relations $S_0, \ldots, S_4$, then 
$S_1$ defines a distance-regular graph $\Gamma$ of diameter $3$ with intersection array $\{6,4,4;1,1,3\}$, that is a generalized hexagon of order $(2,2)$. Moreover, $\Gamma$ is the symmetrisation of the association scheme, and $\Gamma_2 = S_3$ and $\Gamma_3 = S_2 \cup S_4$ are the 
distance 2 and 3 relations of $\Gamma$, respectively.

\begin{thm}\label{thm:2}
An association scheme with the eigenmatrix in \eqref{eq:HexFissionP} is unique up to isomorphism. In particular, it is the $4$-class fission scheme of the distance-regular collinearity graph of the split Cayley generalised hexagon $H(2)$ described in Example~\ref{ex:hex}.
\end{thm}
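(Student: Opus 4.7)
The plan is to combine the triple-regularity analysis from the proof of Lemma~\ref{lem:1} with the Cohen--Tits classification of generalised hexagons of order $(2,2)$. First I would verify that any scheme $\cY$ with eigenmatrix \eqref{eq:HexFissionP} is triply regular. Computing $\Qmat = 63\,\Pmat^{-1}$ from \eqref{eq:HexFissionP} reveals a number of vanishing Krein parameters $q_{ij}^k$; Proposition~\ref{prop:qijk=0} together with the marginal identities \eqref{eqn:triple} and the boundary identities \eqref{eqn:triple2} then produces, for each triple of relations $(U,V,W)$ between vertices $u,v,w$, a linear system in the $5^3$ unknown triple intersection numbers $\sV{u&v&w}{i&j&k}$ whose coefficient matrix has full rank. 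The triple intersection numbers are therefore uniquely determined and must coincide with those of the scheme in Example~\ref{ex:hex}.

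Next, I would consider the symmetrisation of $\cY$ obtained by fusing the two conjugate non-symmetric relations $S_2$ and $S_4$. This produces a $3$-class symmetric association scheme whose distance-$1$ graph $\Gamma$ is distance-regular with intersection array $\{6,4,4;1,1,3\}$, hence a generalised hexagon of order $(2,2)$. By the Cohen--Tits classification of generalised hexagons of order $(2,2)$, $\Gamma$ is isomorphic either to the collinearity graph of the split Cayley hexagon $H(2)$ or to that of its point-line dual $H(2)^D$. Since Example~\ref{ex:hex} realises the eigenmatrix with $\Gamma = H(2)$, it remains to rule out $H(2)^D$ and to establish uniqueness of the fission when $\Gamma = H(2)$.

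Both remaining tasks proceed by using the triple intersection numbers pinned down in the first step. The directed relations $S_2$ and $S_4$ partition the $32$ vertices at distance $3$ from any fixed vertex of $\Gamma$ into two $16$-sets, and the triple intersection numbers prescribe precisely how these sets must interact with the distance-$1$ and distance-$2$ neighbours at other reference vertices. Following the strategy of \cite{BBB2004, GS2024}, one checks case-by-case that in $H(2)^D$ no such partition of the distance-$3$ neighbourhood satisfies the required constraints, while in $H(2)$ the partition is uniquely determined up to swapping $S_2 \leftrightarrow S_4$, which corresponds to complex conjugation of the eigenmatrix and therefore to the same isomorphism class of scheme.

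The main obstacle lies in the third step: carrying out the detailed combinatorial analysis that simultaneously excludes $H(2)^D$ as a symmetrisation and establishes rigidity on $H(2)$. The triple-regularity computation of the first step and the Cohen--Tits classification serve as relatively clean inputs, so the content is in matching the pinned-down triple intersection numbers to the local structure of each generalised hexagon, and then assembling the local information into a global isomorphism.
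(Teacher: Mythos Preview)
Your route is genuinely different from the paper's. The paper does \emph{not} invoke the Cohen--Tits classification of generalised hexagons of order $(2,2)$ at all. Instead it argues directly via the spherical embedding attached to the primitive idempotent $\Emat_3$: it fixes seven vertices in a specific configuration in $\Gamma$, writes down their $7\times 7$ Gram matrix (which has a handful of undetermined entries coming from unknown relations among the chosen vertices), and by computer enumerates the finitely many positive-semidefinite completions. For each completion it then searches, again by computer, for the remaining $56$ unit vectors in $\mathbb{C}^7$ with the prescribed inner-product set, finds that up to permutational equivalence there is a unique solution, and checks that this solution realises the scheme of Example~\ref{ex:hex}. This is exactly the method of \cite{BBB2004,GS2024} that you cite, but applied to the embedding rather than to a combinatorial analysis inside $H(2)$ versus $H(2)^D$.

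Your plan could in principle work, but as written it has two soft spots. First, the triple-regularity of $\cY$ is a separate computation from Lemma~\ref{lem:1} (which concerns the $256$-point scheme $\mathcal{H}$, not the $63$-point scheme); you would need to confirm that the Krein vanishings from \eqref{eq:HexFissionP} together with \eqref{eqn:triple}--\eqref{eqn:triple2} really do give a full-rank system here. Second, and more seriously, your step three---ruling out $H(2)^D$ and proving rigidity of the fission on $H(2)$ using the pinned-down triple intersection numbers---is precisely the content of the theorem, and you have only asserted that it can be done ``case-by-case'' without carrying it out. The paper's Gram-matrix approach sidesteps both issues: it never needs triple regularity of $\cY$, and it replaces the delicate combinatorial dichotomy by a finite computer search in $\mathbb{C}^7$. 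What your approach would buy, if completed, is a proof that does not rely on machine verification and that explains \emph{structurally} why the dual hexagon fails; what the paper's approach buys is a short, uniform argument that needs no external classification result.
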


\begin{proof}
Let $\cX$ be an association scheme with eigenmatrix \eqref{eq:HexFissionP} and $\Gamma$ the distance-regular graph arising from the first relation.
Fix the following seven vertices 
$\boldsymbol{x}_1,\ldots,\boldsymbol{x}_7$ with $\boldsymbol{x}_2,\boldsymbol{x}_3,\boldsymbol{x}_4\in \Gamma_1(\boldsymbol{x}_1)$, $\boldsymbol{x}_5,\boldsymbol{x}_6,\boldsymbol{x}_7\in \Gamma_2(\boldsymbol{x}_1)$, $\boldsymbol{x}_5\in \Gamma_2(\boldsymbol{x}_2),\boldsymbol{x}_6\in \Gamma_2(\boldsymbol{x}_4),\boldsymbol{x}_7\in \Gamma_1(\boldsymbol{x}_3)$. 

Computing the second eigenmatrix of $\cX$ gives
\[
\left[
\begin{array}{ccccc}
 1 & 21 & 27 & 7 & 7 \\
 1 & \frac{21}{2} & -\frac{9}{2} &  -\frac{7}{2} & -\frac{7}{2} \\
 1 & -\frac{21}{8} & \frac{27}{8} &  \zeta & \overline{\zeta} \\
 1 & 0 & -\frac{9}{2} & \frac{7}{4} & \frac{7}{4} \\
 1 & -\frac{21}{8} & \frac{27}{8} & \overline{\zeta} & \zeta
 \end{array}
\right], 
\]
where $\zeta = -\frac{7+21i}{8}$. 
Consider the spherical embedding 
of these vertices into $\mathbb{C}^7$ by the primitive idempotent $\Emat_{3}=\frac{1}{63}(7\Amat_0 - \frac{7}{2}\Amat_1 + \zeta \Amat_2 + \frac{7}{4}\Amat_3 + \overline{\zeta} \Amat_4)$ of $\cX$ (see \eqref{eq:HexFissionP} for ordering of the adjacency matrices). 
Then the Gram matrix of the vertices $\boldsymbol{x}_1,\ldots,\boldsymbol{x}_7$ is of the form: 
\begin{align*}
    G=\left[
\begin{array}{ccccccc}
 1 & \beta_1 & \beta_1 & \beta_1 & \beta_2 & \beta_2 & \beta_2 \\
 \beta_1 & 1 & \beta_2 & \beta_2 & \beta_2 & \gamma_{1} & \gamma_{2} \\
 \beta_1 & \beta_2 & 1 & \beta_2 & \gamma_{3} & \gamma_{4} & \beta_1 \\
 \beta_1 & \beta_2 & \beta_2 & 1 & \gamma_{5} & \beta_2 & \gamma_{6} \\
 \beta_2 & \beta_2 & \overline{\gamma_{3}} & \overline{\gamma_{5}} & 1 & \gamma_{7} & \gamma_{8} \\
 \beta_2 & \overline{\gamma_{1}} & \overline{\gamma_{4}} & \beta_2 & \overline{\gamma_{7}} & 1 & \gamma_{9} \\
 \beta_2 & \overline{\gamma_{2}} & \beta_1 & \overline{\gamma_{6}} & \overline{\gamma_{8}} & \overline{\gamma_{9}} & 1 \\
\end{array}
\right],
\end{align*}
where $\beta_1=-1/2$, $\beta_2=1/4$, 
$\beta_3=\overline{\zeta}/7$, $\beta_4={\zeta}/7$
and $\gamma_1,\ldots,\gamma_6\in\{\beta_3,\beta_4\},\gamma_7,\gamma_8,\gamma_9\in\{\beta_2,\beta_3,\beta_4\}$. 

By computer we find that there are exactly 16 possibilities for $G$ to be positive semidefinite (and in fact they turn out to also be positive definite),  but only 10 up to permutational equivalence.
Let $G_1,\ldots,G_{10}$ be these Gram matrices.
For each $G_i$, consider the Cholesky decomposition: $G_{i}=L_i L_i^*$ where $L_i$ is a lower triangular matrix with non-zero diagonal. 
The image of the spherical embedding of the remaining 56 vertices of the scheme must have inner products in $\beta_1,\ldots,\beta_4$ with all the row vectors of $L_i$. 
Therefore the 56 vectors 
on the unit sphere $S_{\mathbb{C}}^6$
must be in the following set: 
\[
Z_i=\{\boldsymbol{u}^\top\in S_{\mathbb{C}}^6 \mid L_i\boldsymbol{u}^* \in \{\beta_1,\beta_2,\beta_3,\beta_4\}^{7}\}.
\]
Note that $|Z_i|=106$ for each $i$.  
Next, we want to find the vectors in $Z_i$ with inner products in $\beta_1,\ldots,\beta_4$. 
Consider a graph with vertex set $Z_i$ and edge set $E_i$  defined by $\{\boldsymbol{y},\boldsymbol{y}'\}\in E_i$ if and only if 
$\langle \boldsymbol{y},\boldsymbol{y}'\rangle\in
\{\beta_1,\beta_2,\beta_3,\beta_4\}$. 
By computer, we find that for each $i\in\{1,\ldots,10\}$, there exist exactly two cliques, say $C_{i,1},C_{i,2}$, of order $56$ in the graph $(Z_i,E_i)$. 
For $i=1,\ldots,10$ and $j=1,2$, the Gram matrices of the row vectors of $L_i$ and the vectors in $C_{i,j}$ are permutationally 
equivalent.
 
Thus, we may assume that the $63$ vertices 
are represented by the row vectors from $L_1$, $C_{1,1}$, and we can directly verify that, together with the binary relations defined by their inner products, they form an association scheme of $4$ classes with the same parameters 
as the $4$-class fission scheme of the distance-regular collinearity graph of the split Cayley generalised hexagon $H(2)$ described in Example \ref{ex:hex}.
This completes the proof of Theorem \ref{thm:2}. 
\end{proof}

\subsection*{Acknowledgements} 
We would like to thank Yuefeng Yang for pointing out some typos in an earlier version of this manuscript.
The research of Sho Suda is supported by JSPS KAKENHI Grant Number 22K03410.
Jesse Lansdown is a JSPS International Research Fellow and additionally was supported by a 2023 University of Canterbury Faculty of Engineering Strategic Research Grant and by JSPS KAKHENHI Grant Number 24KF0099.

\bibliographystyle{amsplain}

\providecommand{\bysame}{\leavevmode\hbox to3em{\hrulefill}\thinspace}
\providecommand{\MR}{\relax\ifhmode\unskip\space\fi MR }
\providecommand{\MRhref}[2]{%
  \href{http://www.ams.org/mathscinet-getitem?mr=#1}{#2}
}
\providecommand{\href}[2]{#2}

\end{document}